\newtheorem{theorem}{Theorem}[section]
\newtheorem{proposition}[theorem]{Proposition}
\newtheorem{lemma}[theorem]{Lemma}
\theoremstyle{remark}
\newtheorem{remark}[theorem]{Remark}
\numberwithin{equation}{section}
\begin{document}

\title[Discrete Fourier transform and the affine Hecke algebra]
{A discrete Fourier transform associated with the affine Hecke algebra}

\author{J.F.  van Diejen}

\author{E. Emsiz}

\address{
Facultad de Matem\'aticas, Pontificia Universidad Cat\'olica de Chile,
Casilla 306, Correo 22, Santiago, Chile}
\email{diejen@mat.puc.cl, eemsiz@mat.puc.cl}

\subjclass[2000]{Primary: 42A38; Secondary: 20C08, 33D80}
\keywords{discrete Fourier transform, affine Hecke algebra, spherical function}

\thanks{Work was supported in part by the {\em Fondo Nacional de Desarrollo
Cient\'{\i}fico y Tecnol\'ogico (FONDECYT)} Grants \# 1090118 and  \# 11100315,
and by the {\em Anillo ACT56 `Reticulados y Simetr\'{\i}as'}
financed by the  {\em Comisi\'on Nacional de Investigaci\'on
Cient\'{\i}fica y Tecnol\'ogica (CONICYT)}}

\date{December 2011}

\begin{abstract}
We introduce an explicit representation of the double affine Hecke algebra (of type $A_1$)
at $q=1$ that gives rise to a
periodic counterpart of a well-known Fourier transform associated with
the affine Hecke algebra.
\end{abstract}

\maketitle

\section{Introduction}\label{sec1}

It is a classroom observation that any orthogonal basis of a separable Hilbert space
gives rise to a corresponding (generalized) Fourier transform (establishing an isomorphism between
the Hilbert space and an $l^2$ space over the basis indices).
Of particular interest are in this context the Fourier series corresponding to bases of classical
(basic) hypergeometric orthogonal polynomials
and their generalizations given by  Fourier-type transforms with kernels governed by
non-polynomial (basic) hypergeometric functions
\cite{koe-les-swa:hypergeometric,koe-sto:askey-wilson}.
Remarkably, such (basic) hypergeometric Fourier transforms can be fruitfully understood
as generalized spherical transforms associated with double affine Hecke algebras and their degenerations \cite{che:double,gro:fourier,koe:q-krawtchuk,mac:affine,nou-sto:askey-wilson}.

In this paper we focus on a limiting situation with the Fourier basis given by
one-dimensional Hall-Littlewood polynomials.
The corresponding Fourier transform is a well-known parameter-deformation associated with the affine
Hecke algebra (of type $A_1$)
interpolating between spherical transforms for rank-one $p$-adic symmetric spaces
\cite{car:harmonic,mau:spherical,sil:pgl2}. By extending the affine Hecke algebra to a double affine Hecke
algebra (at $q=1$), we arrive at a finite-dimensional discrete Fourier transform that may be
seen as a natural periodic analog of this deformed $p$-adic spherical transform.

The material is organized as follows. Section \ref{sec2} recalls the definition of the $A_1$-type double
affine Hecke algebra (at $q=1$) and outlines some of its key properties.
Section \ref{sec3} introduces two representations of this double affine Hecke algebra
on the space of complex functions over $\mathbb{Z}$ in terms of
difference-reflection operators and integral-reflection operators, respectively.
In Section \ref{sec4} an explicit operator intertwining the difference-reflection representation
and the integral-reflection representation is presented.
The unitarity of the difference-reflection representation with respect to an appropriate Hilbert space
structure is studied in Section \ref{sec5} and a
discrete Laplacian corresponding to a central element in the double affine Hecke algebra is introduced.
The construction of a (generalized) spherical function associated with the double affine Hecke algebra
in Section \ref{sec6} gives rise to a basis of
eigenfunctions diagonalizing the discrete Laplacian.
The eigenfunctions in question constitute the kernel of a discrete Fourier transform associated with the
affine Hecke algebra whose Plancherel formula is determined in Section \ref{sec7}.

Our discrete Laplacian and its eigenfunctions turn out to be special instances of a
quantum Hamiltonian and its Bethe-Ansatz eigenfunctions modeling an integrable
$n$-particle system discretizing the delta Bose gas on the circle \cite{die:diagonalization} (cf. Remark \ref{connections} below for further details). From this perspective, our results provide a first step towards
the construction of an appropriate Hecke-algebraic framework
in the spirit of \cite{ems-opd-sto:periodic} for the discrete quantum model introduced in Ref. \cite{die:diagonalization}.

\section{Double affine Hecke algebra}\label{sec2}

The {\em double affine Hecke algebra} $\mathbb{H}$  with parameter $\tau\in (0,1)$ (and $q=1$)
of type $A_1$
is the complex unital associative algebra  generated by invertible elements $T$, $U$ and $X$ subject
to the relations \cite[Ch. 6.1]{mac:affine}
\begin{align}\label{daha-relations}
 (T-\tau)(T+\tau^{-1})=0,\quad U^2=1,\ \text{and}\ U X U = X^{-1}=T^{-1} XT^{-1}.
\end{align}
The subalgebra $H:=\langle T,U\rangle\subset \mathbb{H}$ is referred to as the (extended)
{\em affine Hecke algebra}.

A convenient linear basis for $\mathbb{H}$ can be constructed by means of  the underlying
(extended) {\em affine Weyl group}  $W:=\langle s,u | s^2=1,u^2=1 \rangle$.  Indeed, any
group element $w\in W$ can be written uniquely in the form $w=u^{i_0}s_{i_1}\cdots s_{i_r}$,
where $s_1:=s$, $s_0:=usu$, $i_0,\ldots ,i_r\in \{ 0,1\}$, and $r\in\{ 0,1,2,\cdots \}$ is minimal.
Such a decomposition of $w$ is called {\em reduced expression} and its length $r$ defines the
{\em length} $\ell (w)$ of the group element. Upon setting $T_w:= U^{i_0} T_{i_1}\cdots T_{i_r}$,
with $T_1:=T$ and $T_0:=UTU$, one has that the algebra elements $T_wX^k$, $w\in W$, $k\in\mathbb{Z}$
form a linear basis of $\mathbb{H}$ and that the algebra elements $T_w$, $w\in W$ form a linear basis
of the subalgebra $H$. The quadratic relation for $T$ in  Eq. \eqref{daha-relations} implies the following elementary
multiplication rules in $H$
\begin{subequations}
\begin{align}
T_wT &= T_{ws}+\frac{1}{2}(1-\eta (w))(\tau-\tau^{-1})T_w,\label{TwT} \\
TT_w &= T_{sw}+\frac{1}{2}(1-\eta (w^{-1}))(\tau-\tau^{-1})T_w,\label{TTw}
\end{align}
where $\eta: W\to \{ 1,-1\}$ is defined by
\begin{equation}\label{eta-def}
\eta (w):= \ell (ws)-\ell (w).
\end{equation}
\end{subequations}

For future reference we also exhibit the multiplicative action of the generators
of $H$ on the abelian subalgebra $\langle X\rangle$ and vice versa.
In the former situation only the action of $T$ is nontrivial and given by the following classical
formula
\begin{align}
T X^k &= X^{-k} T + (\tau-\tau^{-1})\frac{X^k-X^{-k}}{1-X^{-2}}\quad (k\in\mathbb{Z})  \nonumber \\
& = X^{-k} T + \text{sign} (k) (\tau-\tau^{-1})  \sum_{j=0}^{|k|-1} X^{|k|-2j} . \label{lusztig}
\end{align}
For $k=1$ and $k=-1$ this formula becomes $TX=X^{-1}T+ (\tau-\tau^{-1})X$ and
$TX^{-1}=XT- (\tau-\tau^{-1})X$, respectively,
which are both manifest upon multiplying the relation $TX^{-1}T=X$
by $T$ (from the left or from the right, respectively) and successive elimination of the $T^2$ factor
with the aid of the quadratic relation for $T$. The general
case then readily follows by induction in $k$.

The multiplicative action of $X$ on the basis of $H$ is more intricate. To give a precise description,
let us note that the affine Weyl group decomposes as $W=\Omega \ltimes W_S$, where
$\Omega := \langle u \rangle = \{ 1, u\}$ and $W_S$ denotes the normal subgroup $\langle s_0, s_1 \rangle\subset W$.
This decomposition gives rise to the following partial
{\em Bruhat order} on $W$:
\begin{align}
w<w' \iff w^{-1}w' \in W_S    \  \text{and}\  \ell (w)<\ell (w') .
\end{align}

\begin{proposition}\label{Xaction:prp}
Let $w=u^rw^\prime$ with $r\in \{ 0 ,1\}$ and $w^\prime\in W_S$, and let $\varepsilon\in \{ -1 ,1\}$.
Then one has that
\begin{subequations}
 \begin{equation}
 T_w X^\varepsilon=X^{\varepsilon(-1)^{\ell (w)+r}} T_w +\varepsilon \eta(w) X^{\eta(w)(-1)^{\ell (w)+r}}
 \sum_{\substack{v\in W \\ v<w}} a(\ell (w)-\ell (v))T_v  ,
  \end{equation}
with $\eta (w)$ given by Eq. \eqref{eta-def} and
\begin{equation}\label{ak}
a(k):= \Bigl( \frac{1-\tau^2}{1+\tau^2}\Bigr)  (\tau^{-k}+(-1)^{k+1}\tau^k) .
\end{equation}
\end{subequations}
\end{proposition}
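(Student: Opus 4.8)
The plan is to establish the identity by induction on $\ell(w)$, after two structural reductions that strip away the factor $u^r$ and the asymmetry between $s_0$ and $s_1$. \emph{First} I would remove $u^r$. Writing $w=u^rw'$ with $w'\in W_S$, one has $T_w=U^rT_{w'}$, and the relation $UXU=X^{-1}$ gives $U^rX^a=X^{(-1)^ra}U^r$; hence left multiplication by $U^r$ converts the $r=0$ instance (for $w'$) into the general one. Each $X$-exponent is multiplied by $(-1)^r$, so $(-1)^{\ell(w')}$ becomes $(-1)^{\ell(w)+r}$ (note $\ell(w)=\ell(w')$ and $\eta(w)=\eta(w')$), while $U^rT_{v'}=T_{u^rv'}$ identifies $\sum_{v'<w'}$ with $\sum_{v<w}$ under $v=u^rv'$ (using $v<w\Leftrightarrow v'<w'$). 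Thus it suffices to treat $w\in W_S$.

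\emph{Second}, I would use the automorphism $\sigma$ of $\mathbb H$ given by $\sigma(Y)=UYU$, which satisfies $\sigma(T)=T_0$, $\sigma(T_0)=T$, $\sigma(X^\varepsilon)=X^{-\varepsilon}$ and acts on $W$ as $w\mapsto uwu$, preserving $\ell$, the Bruhat order and the factor $r$, while interchanging the two length-$L$ elements $a_L=s_1s_0s_1\cdots$ and $b_L=s_0s_1s_0\cdots$ of $W_S$. Since $us_1=s_0u$ forces $\eta(uwu)=-\eta(w)$ for $w\neq1$ (and the correction sum is empty when $w=1$), one checks that $\sigma$ carries the asserted identity for $(w,\varepsilon)$ to the one for $(uwu,-\varepsilon)$. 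Consequently, at each length $L\ge1$ it is enough to prove the statement for whichever of $a_L,b_L$ ends in $s=s_1$; its companion then follows by $\sigma$, and no separate multiplication rule for $s_0$ is ever needed.

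For the inductive step, let $w\in W_S$ with $\ell(w)=L$ end in $s$, so $w=w''s$ with $\ell(w'')=L-1$, $\eta(w'')=1$, whence $T_w=T_{w''}T$ by \eqref{TwT}. Setting $g=\tau-\tau^{-1}$, the cases $k=\pm1$ of \eqref{lusztig} give $TX^\varepsilon=X^{-\varepsilon}T+\varepsilon gX$, so
\[
T_wX^\varepsilon=T_{w''}X^{-\varepsilon}\,T+\varepsilon g\,T_{w''}X .
\]
Substituting the induction hypothesis for $T_{w''}X^{-\varepsilon}$ and for $T_{w''}X$ (both of length $L-1$, with $\eta(w'')=1$) and using $T_{w''}T=T_w$, the leading term collapses to $X^{\varepsilon(-1)^{L}}T_w$, which is the required $X^{\varepsilon(-1)^{\ell(w)}}T_w$ since here $\eta(w)=-1$. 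Writing $\Sigma(y)=\sum_{v<y}a(\ell(y)-\ell(v))T_v$, the surviving terms combine into $-\varepsilon X^{(-1)^{L-1}}\bigl(\Sigma(w'')T-g\,T_{w''}-g\,\Sigma(w'')\bigr)$, which must equal the required correction $\varepsilon\eta(w)X^{\eta(w)(-1)^{L}}\Sigma(w)=-\varepsilon X^{(-1)^{L-1}}\Sigma(w)$. The whole proposition thereby reduces to the purely Hecke-algebraic identity
\[
\Sigma(w)=\Sigma(w'')T-g\,T_{w''}-g\,\Sigma(w'')
\]
in $H$.

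The heart of the matter is this last identity, which I would settle by expanding $\Sigma(w'')T$ through \eqref{TwT} (each $T_vT=T_{vs}+\frac{1}{2}(1-\eta(v))g\,T_v$) and comparing the coefficients of the basis elements $T_v$. I expect the main obstacle to be purely the bookkeeping of the cancellation: for each interior $v$ the correction $\frac{1}{2}(1-\eta(v))g\,T_v$ produced by $\Sigma(w'')T$ exactly annihilates the matching term of $-g\,\Sigma(w'')$, while the shifted contributions $T_{vs}$ reassemble into $\Sigma(w)$ precisely by means of the three-term recurrence
\[
a(k)=a(k-2)-g\,a(k-1),\qquad g=\tau-\tau^{-1},
\]
whose validity for the coefficients \eqref{ak} I would verify directly from their closed form. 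The handful of boundary coefficients—those of $T_1$ and of the top elements $T_{a_{L-1}},T_{b_{L-1}}$—are matched separately, the vanishing $a(0)=0$ and the value $a(1)=-g$ making the leftover terms fit.
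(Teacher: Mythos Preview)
Your argument is correct and rests on the same two pillars as the paper's proof: the reduction via conjugation by $U$ (your automorphism $\sigma$) and the induction on $\ell(w)$ driven by the three-term recurrence $a(k)=a(k-2)-(\tau-\tau^{-1})a(k-1)$. The organization differs in two minor but interesting respects. The paper first reduces to the single case $(\varepsilon,\eta(w))=(1,1)$, using the centrality of $X+X^{-1}$ to pass from $\varepsilon=-1$ to $\varepsilon=1$, and then inducts by \emph{left} multiplication, prepending $T_{s_i}$ (with $i$ chosen so that $s_iw>w$) and using $T_{s_i}X^{(-1)^{\ell(w)}}=X^{(-1)^{\ell(w)+1}}(T_{s_i}-g)$ followed by \eqref{TTw}. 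You instead keep both $\varepsilon$ and induct by \emph{right} multiplication, appending $T$; this forces you to invoke the induction hypothesis for $T_{w''}X$ as well as $T_{w''}X^{-\varepsilon}$, and packages the inductive step as the clean Hecke-algebra identity $\Sigma(w)=\Sigma(w'')T-gT_{w''}-g\Sigma(w'')$. The paper's route is marginally shorter because the central-element trick cuts the number of sign cases in half; your route avoids that trick entirely and makes the role of the recurrence on $a(k)$ more transparent by isolating it in a separate identity.
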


\begin{proof}
It is sufficient to consider the case that $r=0$, $\varepsilon =1$ and $\eta (w)=1$:
\begin{equation}\label{TwX}
T_w X = X^{(-1)^{\ell (w)}}\Bigl( T_w +\sum_{v\in W ,\, v<w} a(\ell (w)-\ell (v))T_v \Bigr)
\end{equation}
(for $w\in W_S$ with $ws>w$).
Indeed, the case $r=0$ and $r=1$ are related via the multiplication of both sides from the left by $U$, and
the formulas corresponding to the other
three combinations of signs $(\varepsilon ,\eta (w))=(-1,1),(1,-1), (-1,-1)$
follow from the case $(\varepsilon ,\eta (w))=(1,1)$ upon invoking the relations
$T_wX^{-1}=(X+X^{-1})T_w-T_wX$
(cf. Remark \ref{center:rem} below) and
$T_{uwu}X=UT_wX^{-1}U$. (Notice in this respect that $\eta(uwu)=-\eta(w)$ for $w\in W\setminus\Omega$.)
We will now proceed to prove Eq. \eqref{TwX} by induction on $\ell (w)$ starting
from the trivial case $\ell(w)=0$.
Let $w\in W_S$ with $ws>w$ and let $i=\ell (w)\mod 2$ (i.e. $i\in \{ 0,1\}$ is chosen such that
$s_iw>w$ (and
$\eta (s_iw)=1$)). Assuming that the relation in Eq. \eqref{TwX} holds for $w$, one readily deduces that
\begin{align*}
T_{s_iw}X & = T_{s_i} X^{(-1)^{\ell (w)}} \Bigl(  T_w +
 \sum_{v\in W, \, v<w} a(\ell (w)-\ell (v))T_v\Bigl) \\
&\stackrel{(i)}{=} X^{(-1)^{\ell (s_iw)}} \Bigl( T_{s_i} -(\tau-\tau^{-1})\Bigr) \Bigl(  T_w +
 \sum_{v\in W ,\, v<w} a(\ell (w)-\ell (v))T_v\Bigr) \\
 &\stackrel{(ii)}{=} X^{(-1)^{\ell (s_iw)}} \Bigl(  T_{s_iw}+
  \sum_{\substack{v\in W,\, v<s_i w\\ s_iv<v}} a(\ell (s_iw)-\ell (v))T_v \Bigr.\\
  & +\Bigl. \sum_{\substack{v\in W,\, v<s_i w\\ s_iv>v }}
  \bigl( a(\ell (s_iw)-\ell (v)-2)-(\tau-\tau^{-1})a(\ell (s_iw)-\ell (v)-1)\bigr) T_v \Bigr) \\
  &\stackrel{(iii)}{=}X^{(-1)^{\ell (s_iw)}} \Bigl(  T_{s_iw}+
  \sum_{\substack{v\in W,\, v<s_i w}} a(\ell (s_iw)-\ell (v))T_v \Bigr) .
\end{align*}
Here we have used $(i)$ the relation in
 Eq. \eqref{lusztig} with $k=-1$ (upon conjugation by
$U$ if $i=0$),
$(ii)$ the property that $T_{s_i}T_v=T_{s_iv}$ if $s_iv>v$ and
$T_{s_i}T_v=T_{s_iv}+(\tau-\tau^{-1})T_v$ if $s_iv<v$ (cf. Eq. \eqref{TTw}), and $(iii)$ the identity $a(k-2)-(\tau-\tau^{-1})a(k-1)=a(k)$.
\end{proof}

\begin{remark}\label{center:rem}
It is immediate from the relation in Eq. \eqref{lusztig} (for $k=1$ and $k=-1$) in combination with
the commutation relation
$UX^{\pm 1}=X^{\mp 1}U$ that the element $X+X^{-1}$ lies
in the  center $\mathcal{Z}(\mathbb{H})$ of the double affine Hecke algebra.
\end{remark}

\section{Representations of $\mathbb{H}$ on $\mathcal{C}(\mathbb{Z})$}\label{sec3}
For any (fixed) positive integer $M$, the action
$s n:=-n$, $u n:=M-n$ on $n\in\mathbb{Z}$ determines a faithful representation of $W$
on the space $\mathcal{C}(\mathbb{Z})$ of functions $f:\mathbb{Z}\to\mathbb{C}$ given by $(wf)(n):=f(w^{-1}n)$.
A fundamental domain with respect to the action of $W_S\subset W$ on $\mathbb{Z}$ is given by
$\Lambda_M:=\{ 0,1,2,\ldots ,M\}$. For any $n\in\mathbb{Z}$, let us denote by $w_n$ the unique shortest element in $W_S$ mapping $n$ into $\Lambda_M$, i.e.
\begin{equation}\label{wn}
w_n = \begin{cases} \cdots s_0 s_1 s_0\ \text{with}\ \ell (w_n) = p &\text{if}\ n =pM+r, \
p\in\mathbb{Z}_{>0},\ 0 < r \leq M, \\
\cdots s_1 s_0 s_1\ \text{with}\ \ell (w_n) = p &\text{if}\ n =-pM+r, \
p\in\mathbb{Z}_{>0},\ 0 \leq r < M ,\\
1 &\text{if}\ 0\leq n\leq M.
\end{cases}
\end{equation}
With the aid of this representation of $W$, we now introduce two explicit representations of the double affine Hecke algebra $\mathbb{H}$ on  $\mathcal{C}(\mathbb{Z})$.
For convenience, we often employ the shorthand conventions
$f_n$ for $f(n)$ and $n_+$ for $w_n n$, and it will moreover be assumed that $M>1$ from now on.

\subsection{Difference-reflection representation}
Let $\hat{T}:\mathcal{C}(\mathbb{Z})\to \mathcal{C}(\mathbb{Z})$ be the operator
\begin{subequations}
\begin{equation}
\hat{T}:=\tau + \tau^{\text{sign}}(s-1),
\end{equation}
where $\tau$ and $\tau^{\text{sign}}$ act by multiplication, i.e. $(\tau f)_n:=\tau f_n$ and $( \tau^{\text{sign}}f)_n:=\tau^{\text{sign}(n)} f_n$ (with the convention that $\text{sign}(0):=0$). More specifically, one has that
\begin{equation}
(\hat{T}f)_n=
\begin{cases}\tau f_{-n} & \text{if}\ n\geq 0,\\
\tau^{-1} f_{-n} +(\tau-\tau^{-1})f_n&\text{if}\ n< 0.
\end{cases}
\end{equation}
\end{subequations}
Similarly, let $\hat{X}:\mathcal{C}(\mathbb{Z})\to \mathcal{C}(\mathbb{Z})$ denote the operator characterized by
\begin{align}\label{Xhat}
(\hat{X}f)_n :=& \tau^{(\ell (w_{n-1})-\ell (w_n))(1+\eta(w_{un}))}f_{n-1}\\
&+
\text{sign}(n) \sum_{v\in W,\, v<w_n}
\tau^{-(\ell (w_{n})-\ell (v))}a(\ell (w_{n})-\ell (v)) f_{v(n-\text{sign}(n))_+},\nonumber
\end{align}
with $a(\cdot )$ taken from Eq. \eqref{ak}.

\begin{theorem}\label{DAHA-rep1:thm}
The assignment  $T\to \hat{T}$, $U\to u$, $X\to\hat{X}$ extends (uniquely) to a
representation of the double affine Hecke algebra $\mathbb{H}$ on $\mathcal{C}(\mathbb{Z})$, i.e.
\begin{align}\label{DAHA-relations1}
(\hat{T}-\tau)(\hat{T}+\tau^{-1})=0\quad
\text{ and }\quad u\hat{X} u = \hat{X}^{-1}=\hat{T}^{-1} \hat{X}\hat{T}^{-1}.
\end{align}
\end{theorem}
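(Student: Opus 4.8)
The plan is to verify the three defining relations of $\mathbb{H}$ at $q=1$ one at a time for the operators $\hat T,u,\hat X$, using the faithful $W$-action and the explicit formulas given for $\hat T$ and $\hat X$. First I would dispatch the quadratic relation $(\hat T-\tau)(\hat T+\tau^{-1})=0$, which is the easiest piece: since $\hat T=\tau+\tau^{\text{sign}}(s-1)$ and $s^2=1$, I would compute $\hat T^2$ directly by splitting on the sign of $n$ using the case formula for $(\hat T f)_n$, and check that $\hat T^2=(\tau-\tau^{-1})\hat T+1$, equivalently that $\hat T$ has eigenvalues $\tau$ and $-\tau^{-1}$. This is a short two-case calculation. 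The relation $U^2=1$ is automatic because $U\to u$ and $u^2=1$ already holds in the $W$-representation.

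Next I would treat $u\hat X u=\hat X^{-1}$. The cleanest route is to establish $u\hat Xu=\hat X^{-1}$ and $\hat T^{-1}\hat X\hat T^{-1}=\hat X^{-1}$ as consequences of a single master identity, namely that $\hat T\hat X^{-1}\hat T=\hat X$ (equivalently $\hat X\hat T^{-1}\hat X=\hat T$ after rearranging), since in $\mathbb{H}$ all the $X$-relations in Eq.~\eqref{daha-relations} follow from $UXU=X^{-1}=T^{-1}XT^{-1}$ together with the already-verified relations for $T$ and $U$. To make this tractable I would reinterpret the complicated definition of $\hat X$ in Eq.~\eqref{Xhat} through Proposition~\ref{Xaction:prp}: the coefficients $\tau^{-(\ell(w_n)-\ell(v))}a(\ell(w_n)-\ell(v))$ and the index shifts $n\mapsto n-\text{sign}(n)$ are manifestly engineered so that $\hat X$ is the operator whose matrix entries in the basis $\{T_w\}$ mirror the multiplication rule $T_wX=\dots$ from Proposition~\ref{Xaction:prp}. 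Concretely, I would show that under the correspondence sending the basis vector indexed by $n$ to $T_{w_n}$ (in the appropriate module), $\hat X$ implements right multiplication by $X$; the defining relations for $\hat X$ then transfer from the known relations for $X$ in $\mathbb{H}$.

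The technical heart, and the step I expect to be the main obstacle, is proving $\hat T^{-1}\hat X\hat T^{-1}=\hat X^{-1}$. Since $\hat T^{-1}=\tau^{-1}+(\text{eigenvalue correction})$ can be written explicitly from $\hat T^2=(\tau-\tau^{-1})\hat T+1$, I would instead verify the equivalent polynomial identity $\hat T\hat X^{-1}\hat T=\hat X$, which avoids inverting $\hat T$ and reduces everything to composing three explicit difference-reflection operators and matching coefficients. This requires: (a) first computing $\hat X^{-1}$ explicitly — one can either invert $\hat X$ directly, or more cheaply guess that $\hat X^{-1}$ is given by the analogous formula with $n-1$ replaced by $n+1$ and $\text{sign}(n)$ flipped, then verify $\hat X\hat X^{-1}=1$; and (b) carrying out the triple composition $\hat T\hat X^{-1}\hat T$ and checking it agrees with $\hat X$ on each $f_n$. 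The combinatorial bookkeeping of the summation ranges $v<w_n$ and the length differences $\ell(w_n)-\ell(v)$ across the reflections induced by $s$ is where the calculation is delicate; the key algebraic input that makes the coefficients collapse is the recursion $a(k-2)-(\tau-\tau^{-1})a(k-1)=a(k)$ already exploited in the proof of Proposition~\ref{Xaction:prp}, which I would apply repeatedly to telescope the sums. Once these coefficient identities are in hand, matching both sides against the definition \eqref{Xhat} is routine, and uniqueness of the extension follows since $T,U,X$ generate $\mathbb{H}$ and the relations \eqref{daha-relations} are precisely \eqref{DAHA-relations1}.
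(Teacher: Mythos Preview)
Your plan diverges from the paper's proof in a substantive way, and one of your intermediate ideas is not quite well-posed.

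The paper does \emph{not} verify the relations $u\hat{X}u=\hat{X}^{-1}=\hat{T}^{-1}\hat{X}\hat{T}^{-1}$ by direct composition of the formula \eqref{Xhat}; immediately after the theorem statement the authors remark that doing so ``turns out to be very cumbersome.'' Instead they introduce a second, much simpler representation---the integral-reflection representation $T\to I$, $U\to u$, $X\to D$ of Theorem~\ref{DAHA-rep2:thm}, where $D$ is the plain translation $(Df)_n=f_{n-1}$---and verify the DAHA relations there (straightforward, since $uDu=D^{-1}$ is trivial and $IDI=D^{-1}$ reduces to an elementary identity for the discrete integral $J$). They then construct an explicit operator $\mathcal{J}$ on $\mathcal{C}(\mathbb{Z})$ via $(\mathcal{J}f)_n=\tau^{-\ell(w_n)}(I_{w_n}f)_{n_+}$, prove it is bijective (Proposition~\ref{bijective:prp}, by a triangularity argument), and prove the intertwining relations $\hat{T}\mathcal{J}=\mathcal{J}I$, $u\mathcal{J}=\mathcal{J}u$, $\hat{X}\mathcal{J}=\mathcal{J}D$ (Proposition~\ref{intertwining:prp}). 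The relations for $(\hat{T},u,\hat{X})$ then follow by conjugation from those for $(I,u,D)$. The formula \eqref{Xhat} for $\hat{X}$ is in fact \emph{derived} from the requirement $\hat{X}\mathcal{J}=\mathcal{J}D$ together with Proposition~\ref{Xaction:prp}, which is why its coefficients match that proposition so closely.

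Your suggestion to realize $\hat{X}$ as ``right multiplication by $X$ under the correspondence $n\leftrightarrow T_{w_n}$'' is pointing in the right direction but does not work as written: the map $n\mapsto w_n$ is many-to-one (every $n\in\Lambda_M$ has $w_n=1$), so this is not a correspondence of basis vectors and no module isomorphism arises this way. The paper's $\mathcal{J}$ is the correct substitute---it keeps track of both $w_n$ and the representative $n_+\in\Lambda_M$, and it acts within $\mathcal{C}(\mathbb{Z})$ rather than mapping to a Hecke-algebra module.

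Your fallback plan of brute-force verifying $\hat{T}\hat{X}^{-1}\hat{T}=\hat{X}$ is not wrong in principle, but you have not carried it out, and there is a circularity hazard: you propose to obtain $\hat{X}^{-1}$ either by guessing the formula and checking $\hat{X}\hat{X}^{-1}=1$, or implicitly via $u\hat{X}u$; the former is itself a substantial computation with the Bruhat sums, and the latter is one of the relations you are trying to prove. The recursion $a(k-2)-(\tau-\tau^{-1})a(k-1)=a(k)$ would indeed be the engine, but the interaction of the reflection $s$ (which sends $n\mapsto -n$ and hence permutes the $w_n$ nontrivially) with the Bruhat-ordered sums is exactly the bookkeeping the paper sidesteps by passing through $(I,u,D)$.
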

It is not hard to verify the quadratic relation for $\hat{T}$ in Eq. \eqref{DAHA-relations1}
directly, however, to check this way the validity of the commutation
relations involving $u$, $\hat{X}$ and $\hat{T}$, $\hat{X}$ turns out to be very cumbersome.
Instead, Theorem \ref{DAHA-rep1:thm} will follow below in Section \ref{sec4}
as a consequence of intertwining relations connecting
to a second representation of $\mathbb{H}$ in terms of integral-reflection operators.

\subsection{Integral-reflection representation}
Let $I$ be a discrete integral-reflection operator on $\mathcal{C}(\mathbb{Z})$ of the form
\begin{subequations}
\begin{equation}\label{Idef}
I:=\tau s+(\tau-\tau^{-1})J,
\end{equation}
where
\begin{align}\label{Jdef}
(Jf)_n &:=
\begin{cases}
-f_{n-2}-f_{n-4}  -\dots -f_{-n} & \text{if $n>0$}\\
0 & \text{if $n=0$}\\
f_n+f_{n+2}  +\dots +f_{-n-2} & \text{if $n<0$}
\end{cases}
\end{align}
\end{subequations}
(thus in essence integrating $f$ from $n$ to $-n$ with step $2$),
and let $D$ denote the translation operator on lattice functions given by
$(Df)_n:=f_{n-1}$.

\begin{theorem}\label{DAHA-rep2:thm}
The assignment  $T\to I$, $U\to u$, $X\to D$   extends (uniquely) to a
representation of the double affine Hecke algebra $\mathbb{H}$ on $\mathcal{C}(\mathbb{Z})$, i.e.
\begin{align}\label{DAHA-relations2}
(I-\tau)(I+\tau^{-1})=0\quad
\text{ and }\quad uD u = D^{-1}=I^{-1} D I^{-1}.
\end{align}
\end{theorem}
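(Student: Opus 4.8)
The plan is to verify directly the four operator identities encoding the defining relations \eqref{DAHA-relations2}: namely $u^2=1$, $(I-\tau)(I+\tau^{-1})=0$, $uDu=D^{-1}$, and $D^{-1}=I^{-1}DI^{-1}$. The relations $u^2=1$ and $uDu=D^{-1}$ are immediate from the definitions, since $(u^2f)_n=f_{M-(M-n)}=f_n$ and $(uDuf)_n=f_{n+1}=(D^{-1}f)_n$. Moreover, once the quadratic relation is established it yields $I^{-1}=I-(\tau-\tau^{-1})$, so that $D^{-1}=I^{-1}DI^{-1}$ becomes equivalent to $ID^{-1}I=D$. Thus the whole theorem reduces to the quadratic relation for $I$ together with the identity $ID^{-1}I=D$, and the uniqueness of the extension is automatic because $T,U,X$ generate $\mathbb{H}$.

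The engine of both computations is a small calculus of the reflection $s$, the shift $D$, and the discrete integration operator $J$ of \eqref{Jdef}. First I would record the elementary conjugation rules $s^2=1$ and $sD=D^{-1}s$ (equivalently $sD^{-1}=Ds$ and $D^{-1}s=sD$), which follow at once from $(sf)_n=f_{-n}$ and $(Df)_n=f_{n-1}$. The substance lies in the following identities for $J$, all proved by manipulating the summation ranges in \eqref{Jdef}: (i) $sJ=-J$, i.e. $Jf$ is an odd lattice function; (ii) $Js=J+s-1$; (iii) $J^2=J$; (iv) $JD-DJ=-sD$; and (v) $JD^{-1}J=0$. Identity (i) expresses that the symmetric sum from $n$ to $-n$ changes sign under $n\mapsto -n$; (ii) follows because $Js$ and $J$ sum over two ranges differing only at their endpoints, the net contribution being exactly $f_{-n}-f_n$. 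For (iii)--(v) the point is to substitute $g:=Jf$ (so that $g_{-m}=-g_m$ and $g_0=0$ by (i)) and observe that in each case the resulting sum runs over a set of indices symmetric about the origin, whence the odd function $g$ either telescopes away entirely or collapses to a single boundary term.

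With these in hand the two remaining relations fall out by scalar algebra; write $c:=\tau-\tau^{-1}$ and note that $\tau^2-\tau c=1$. Expanding $I^2=\tau^2 s^2+\tau c\,(sJ+Js)+c^2J^2$ and using $s^2=1$ with (i)--(ii) gives $sJ+Js=s-1$, and then (iii) yields $I^2=\tau c\,s+c^2J+1=cI+1$, which is precisely $(I-\tau)(I+\tau^{-1})=0$. For the last relation I would expand
\[
ID^{-1}I=\tau^2\,sD^{-1}s+\tau c\,sD^{-1}J+\tau c\,JD^{-1}s+c^2\,JD^{-1}J ,
\]
and reduce each term: by $sD^{-1}=Ds$ and $s^2=1$ the first term is $\tau^2 D$; by $sD^{-1}=Ds$ and (i) the second is $-\tau c\,DJ$; by $D^{-1}s=sD$ and (ii) the third is $\tau c\,(JD+sD-D)$; and by (v) the fourth vanishes. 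Collecting the $D$-terms with $\tau^2-\tau c=1$ leaves $D+\tau c\,(JD-DJ)+\tau c\,sD$, which equals $D$ by (iv).

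The main obstacle is isolated in the three quadratic identities (iii)--(v). Each requires careful bookkeeping of the discrete integration windows in \eqref{Jdef}, including the separate treatment of the cases $n>0$, $n=0$, $n<0$ and of the two parities of $n$; the unifying mechanism is the oddness (i) of $Jf$, which turns every such sum into a sum of an odd function over an origin-symmetric index set. By contrast, the bilinear reductions (i)--(ii) and the final assembly are entirely routine.
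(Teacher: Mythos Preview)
Your proof is correct and follows essentially the same route as the paper's: both verify the quadratic relation via $J^2=J$ and $sJ+Js=s-1$ and then reduce the remaining relation to an elementary identity among $s$, $D$, $J$ (the paper reformulates it as $D^{-1}I=I^{-1}D$ and boils it down to the single identity $D=JD-D^{-1}J$, whereas you use the equivalent form $ID^{-1}I=D$ together with the pair (iv)--(v)). One small remark: your ``oddness of $g=Jf$'' heuristic handles (iii) and (v) but not (iv), since $JD-DJ$ contains no iterated $J$; identity (iv) is instead a direct comparison of the two summation windows, which is of course still routine.
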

\begin{proof}
From the definition of $I$ it is clear that
$$ I^2-(\tau-\tau^{-1})I-1=(\tau-\tau^{-1})^2(J^2-J)+(\tau^2-1)(sJ+Js-s+1).$$
To verify the quadratic relation for $I$ it is therefore sufficient to show that
{ (a)} $J^2=J$ and {(b)} $sJ+Js=s-1$. The latter two relations follow from the following elementary observations:
$(i)$ if $f$ is odd then $Jf=f$,  $(ii)$ $Jf$ is odd for any $f$,  $(iii)$ $sf-f$ is odd for any $f$.
Indeed, one has that $J^2f \stackrel{(i), (ii)}{=} Jf$ and $(sJ+Js)f\stackrel{(ii)}{=}J(s-1)f\stackrel{(i),(iii)}{=}(s-1)f$ for any $f\in\mathcal{C}(\mathbb{Z})$.

It is immediate from the definitions that $uDu=D^{-1}$, and to infer the last relation involving $I$ and $D$ it is convenient to first recast
it in the form $D^{-1}I=I^{-1}D$. Upon remarking that $I^{-1}=I-(\tau-\tau^{-1})$ (by the quadratic relation for $I$),
substitution of Eq. \eqref{Idef},
and invoking of the elementary property $sDs=D^{-1}$, one ends up with the relation
$D=JD-D^{-1}J$, which is readily verified by acting with both sides
on an arbitrary function $f\in\mathcal{C}(\mathbb{Z})$.
\end{proof}

\begin{remark}
The relation $\hat{X}^{-1}=u\hat{X}u$ gives rise
to the following explicit formula for the action of the inverse operator
$\hat{X}^{-1}$ on $f\in\mathcal{C}(\mathbb{Z})$:
\begin{align}\label{Xhat-inv}
(\hat{X}^{-1}f)_n =& \tau^{(\ell (w_{n+1})-\ell (w_n))(1+\eta(w_{n}))}f_{n+1}\\
&-
\text{sign}(n) \sum_{v\in W,\, v<w_n}
\tau^{-(\ell (w_{n})-\ell (v))}a(\ell (w_{n})-\ell (v)) f_{v(n-\text{sign}(n))_+}.\nonumber
\end{align}
\end{remark}

\section{Intertwining operator}\label{sec4}
For
$h\in\mathbb{H}$, we will denote by $\hat{T}(h)$ and $I(h)$ the images of $h$ under the representations of  Theorem \ref{DAHA-rep1:thm} and Theorem \ref{DAHA-rep2:thm}, respectively, and we will furthermore employ the shorthand notations $\hat{T}_w:=\hat{T}(T_w)$ and $I_w:=I(T_w)$ ($w\in W$).

Let $\mathcal{J}:\mathcal{C}(\mathbb{Z})\to\mathcal{C}(\mathbb{Z})$ be the operator of the form
\begin{equation}\label{Jop}
(\mathcal{J}f)_n:= \tau^{-\ell (w_n)} (I_{w_n}f)_{n_+} \qquad (f\in \mathcal{C}(\mathbb{Z}), n\in\mathbb{Z}),
\end{equation}
where (recall) $n_+:=w_n n$.

\begin{proposition}\label{bijective:prp}
The operator $\mathcal{J}$ \eqref{Jop} constitutes a linear automorphism
of the space $\mathcal{C}(\mathbb{Z})$.
\end{proposition}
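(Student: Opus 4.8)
The operator $\mathcal{J}$ is manifestly linear, so the content of the proposition is its bijectivity. The plan is to establish a triangularity property of $\mathcal{J}$ with respect to the filtration of $\mathcal{C}(\mathbb{Z})$ by absolute value, and then to invert the resulting (locally finite) triangular system. Concretely, I will prove that for every $n\in\mathbb{Z}$ and every $f\in\mathcal{C}(\mathbb{Z})$ one has
\begin{equation*}
(\mathcal{J}f)_n = \tau^{-2\ell (w_n)} f_n + \sum_{\abs{m}<\abs{n}} c_{n,m}\, f_m ,
\end{equation*}
a finite sum with coefficients $c_{n,m}$ independent of $f$. Since $\tau\neq 0$ the ``diagonal'' coefficient $\tau^{-2\ell (w_n)}$ never vanishes and, crucially, the restriction to $\abs{m}<\abs{n}$ means that no term $f_{-n}$ (at the same absolute value) occurs.

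The triangular expansion will be proven by induction on the length $\ell (w_n)$. The base case $\ell (w_n)=0$ is the fundamental domain $\Lambda_M$, where $w_n=1$ and $n_+=n$ force $(\mathcal{J}f)_n=f_n$. For the inductive step I peel off the rightmost generator from a reduced expression of $w_n$: writing $w_n=w_{n''}s_i$ with $n''=s_in$ and $\ell (w_{n''})=\ell (w_n)-1$ (a consequence of \eqref{wn}), the length-additive factorization $T_{w_n}=T_{w_{n''}}T_{s_i}$, which the homomorphism of Theorem \ref{DAHA-rep2:thm} turns into $I_{w_n}=I_{w_{n''}}I_{s_i}$, together with the identity $n_+=n''_+$, yields the recursion
\begin{equation*}
(\mathcal{J}f)_n = \tau^{-1}\bigl(\mathcal{J}(I_{s_i}f)\bigr)_{n''},
\end{equation*}
where $I_{s_i}$ equals $I$ for $i=1$ and $uIu$ for $i=0$. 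Setting $g:=I_{s_i}f$ and applying the induction hypothesis at the shorter point $n''$, it remains to re-express $g$ through $f$ by means of the explicit one-step action read off from \eqref{Idef}--\eqref{Jdef}. This action singles out the value $f_n$ with coefficient $\tau^{-1}$; combined with the prefactor $\tau^{-1}$ and the factor $\tau^{-2\ell (w_{n''})}$ supplied by the induction hypothesis, this reproduces the diagonal coefficient $\tau^{-2\ell (w_n)}$, all other contributions being pushed strictly below $\abs{n}$.

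The step I expect to require the most care is exactly this bookkeeping of absolute values in the inductive step, and in particular the generator $s_0$: since $I_{s_0}=uIu$ and conjugation by $u$ moves the relevant reflection centre from the origin to $M$, a priori the integral operator $J$ could generate lattice points of large absolute value. The key point to verify is that the reflection bringing $n$ back towards $\Lambda_M$ produces precisely the leading term $f_n$, while every point generated by the accompanying summation in $J$, as well as the lower-order terms coming from the induction hypothesis at $n''$, remains strictly inside $(-\abs{n},\abs{n})$; this is where the defining property \eqref{wn} of $w_n$ and the fundamental-domain structure enter.

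Granting the triangular expansion, bijectivity is routine. Ordering $\mathbb{Z}$ by non-decreasing absolute value, the system is triangular with non-vanishing diagonal and with each row involving only finitely many strictly smaller indices. Injectivity then follows by induction on $\abs{n}$: if $\mathcal{J}f=0$, then starting from $f_0=(\mathcal{J}f)_0=0$ the relation $f_n=\tau^{2\ell (w_n)}\bigl(-\sum_{\abs{m}<\abs{n}}c_{n,m}f_m\bigr)$ forces $f_n=0$ once all $f_m$ with $\abs{m}<\abs{n}$ vanish. Surjectivity follows by the same recursion: given $g$, the assignment $f_n:=\tau^{2\ell (w_n)}\bigl(g_n-\sum_{\abs{m}<\abs{n}}c_{n,m}f_m\bigr)$ defines, inductively in $\abs{n}$, a (unique) preimage. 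Hence $\mathcal{J}$ is a linear automorphism of $\mathcal{C}(\mathbb{Z})$.
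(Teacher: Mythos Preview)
Your proposal is correct and follows essentially the same route as the paper: both prove a triangular expansion of $(\mathcal{J}f)_n$ by induction on $\ell(w_n)$, using the factorisation $I_{w_n}=I_{w_{n''}}I_{s_i}$ (with $n''=s_in$, $\ell(w_{n''})=\ell(w_n)-1$) together with the explicit one-step action of $I_{s_i}$ read off from \eqref{Idef}--\eqref{Jdef}, and then invert the resulting locally finite triangular system.

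The one genuine difference is the filtration. The paper orders $\mathbb{Z}$ by the two-tier partial order $k\prec n\iff \ell(w_k)<\ell(w_n)$, or $w_k=w_n$ and $|k|<|n|$, and packages the one-step control in a lemma that tracks both $\ell(w_\cdot)$ and $|\cdot|$. You filter purely by $|\cdot|$, which is sharper (in particular it excludes $f_{-n}$) and makes the inversion completely elementary. The $s_0$-bookkeeping you flag does go through: for $n>M$ one has $n''=2M-n$, and every $m$ with $|m|<|n''|$ (so $2M-n<m<n-2M$ when $n>2M$) satisfies $4M-n<2M-m<n$, hence $|s_0m|<|n|$; the remaining summation points of $I_{s_0}$ at $m$ lie between $m$ and $2M-m$ and are therefore also bounded in absolute value by $|n|$. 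So your absolute-value triangularity is valid, and the diagonal coefficient $\tau^{-2\ell(w_n)}$ matches the paper's $\tau^{-\ell(w_n)}\cdot\tau^{-\ell(w_n)}$.
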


\begin{proposition}\label{intertwining:prp}
The operator $\mathcal{J}$ \eqref{Jop}
satisfies the intertwining relations
\begin{equation}
 \hat{T}\mathcal{J}=\mathcal{J}I,\quad u\mathcal{J}=\mathcal{J}u,\quad \hat{X}\mathcal{J}=\mathcal{J}D .
\end{equation}
\end{proposition}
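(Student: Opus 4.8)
The plan is to verify the three relations one generator at a time, using the multiplication rule \eqref{TwT}, Proposition \ref{Xaction:prp}, and the combinatorics of the folding data $w_n$, $n_+$, $\ell(w_n)$, $\eta(w_n)$. I would first record the facts I keep reusing: $\mathcal{J}$ restricts to the identity on $\Lambda_M$ (there $w_n=1$, $n_+=n$); the symmetries $w_{un}=uw_nu$, $(un)_+=M-n_+$ and $I_{uwu}=uI_wu$ (the last because $T_{uwu}=UT_wU$); the reflection identities $w_{-n}=w_ns$, $(-n)_+=n_+$ for $n\neq0$, with $\eta(w_n)=+1$ for $n\geq0$ and $\eta(w_n)=-1$ for $n<0$; and the evaluation lemma $(\mathcal{J}f)_{vc}=\tau^{-\ell(v)}(I_{v^{-1}}f)_c$ for $c\in\Lambda_M$ and $v\in W_S$, which I would prove by induction on $\ell(v)$. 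Granting these, $u\mathcal{J}=\mathcal{J}u$ is immediate: expanding $(u\mathcal{J}f)_n=\tau^{-\ell(w_{M-n})}(I_{w_{M-n}}f)_{(M-n)_+}$ and inserting the symmetries collapses it to $\tau^{-\ell(w_n)}(I_{w_n}uf)_{n_+}=(\mathcal{J}uf)_n$.

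For $\hat{T}\mathcal{J}=\mathcal{J}I$ I would start from $(\mathcal{J}If)_n=\tau^{-\ell(w_n)}(I_{w_n}If)_{n_+}$ and expand $I_{w_n}I=I(T_{w_n}T)$ by \eqref{TwT}, which gives $I_{w_ns}$ when $\eta(w_n)=1$ and $I_{w_ns}+(\tau-\tau^{-1})I_{w_n}$ when $\eta(w_n)=-1$. Substituting $w_ns=w_{-n}$, $(-n)_+=n_+$ and $\ell(w_{-n})-\ell(w_n)=\eta(w_n)$, the case $n\geq0$ reduces to $\tau(\mathcal{J}f)_{-n}$ and the case $n<0$ to $\tau^{-1}(\mathcal{J}f)_{-n}+(\tau-\tau^{-1})(\mathcal{J}f)_n$; these are exactly the two branches of $\hat{T}$. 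The fixed point $n=0$, where $w_0s=s\neq w_0$, I would dispatch by hand ($I_{w_0}I=I$ and both sides equal $\tau f_0$).

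The relation $\hat{X}\mathcal{J}=\mathcal{J}D$ is the main obstacle. Expanding $I_{w_n}D=I(T_{w_n}X)$ by Proposition \ref{Xaction:prp} (with $r=0$, $\varepsilon=1$) and shifting by the resulting power of $D$ yields
\begin{equation*}
(\mathcal{J}Df)_n=\tau^{-\ell(w_n)}(I_{w_n}f)_{n_+-(-1)^{\ell(w_n)}}+\eta(w_n)\tau^{-\ell(w_n)}\sum_{v<w_n}a(\ell(w_n)-\ell(v))(I_vf)_{n_+-\eta(w_n)(-1)^{\ell(w_n)}}.
\end{equation*}
The correction sum is the clean part. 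Using the evaluation lemma, each term $(\mathcal{J}f)_{v(n-\text{sign}(n))_+}$ in the explicit formula \eqref{Xhat} equals $\tau^{-\ell(v)}(I_{v^{-1}}f)_{(n-\text{sign}(n))_+}$; since the strict lower interval $\{v<w_n\}$ is closed under $v\mapsto v^{-1}$ with $\ell(v^{-1})=\ell(v)$, reindexing turns the $\hat{X}\mathcal{J}$-sum into $\text{sign}(n)\,\tau^{-\ell(w_n)}\sum_{v<w_n}a(\ell(w_n)-\ell(v))(I_vf)_{(n-\text{sign}(n))_+}$. This matches the sum above once I check the combinatorial identity $(n-\text{sign}(n))_+=n_+-\eta(w_n)(-1)^{\ell(w_n)}$ and $\text{sign}(n)=\eta(w_n)$ for $n\neq0$.

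What remains, and where I expect the real work to be, is the leading term: reconciling $\tau^{(\ell(w_{n-1})-\ell(w_n))(1+\eta(w_{un}))}(\mathcal{J}f)_{n-1}$ with $\tau^{-\ell(w_n)}(I_{w_n}f)_{n_+-(-1)^{\ell(w_n)}}$. In the generic case $n-1$ sits in the same chamber as $n$, so $w_{n-1}=w_n$, $(n-1)_+=n_+-(-1)^{\ell(w_n)}$ and the exponent vanishes; but when the unit shift crosses a wall the shifted point $n_+-(-1)^{\ell(w_n)}$ lands on an endpoint $0$ or $M$ (or just outside $\Lambda_M$), $w_{n-1}\neq w_n$, and I would use the boundary-reduction identities $(I_{s_1w}f)_0=\tau(I_wf)_0$, $(I_{s_0w}f)_M=\tau(I_wf)_M$ (valid for $s_iw>w$, since $I$ and $I_{s_0}=uIu$ act as multiplication by $\tau$ at the respective fixed points $0$ and $M$) together with the companion reflection across $0$ to absorb the length jump recorded by the exponent $(\ell(w_{n-1})-\ell(w_n))(1+\eta(w_{un}))$. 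Organizing this wall-crossing bookkeeping uniformly in $n$ is the crux. Finally, once all three relations hold, bijectivity of $\mathcal{J}$ (Proposition \ref{bijective:prp}) transports the relations \eqref{DAHA-relations2} through $\mathcal{J}$, which is how Theorem \ref{DAHA-rep1:thm} is meant to follow.
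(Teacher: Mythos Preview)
Your proposal is correct and follows essentially the same route as the paper's proof: both arguments verify the three relations generator by generator, using \eqref{TwT} together with $w_{-n}=w_ns$ for $\hat T\mathcal J=\mathcal JI$, the symmetry $w_{un}=uw_nu$ for $u\mathcal J=\mathcal Ju$, and Proposition~\ref{Xaction:prp} for $\hat X\mathcal J=\mathcal JD$, with the leading term handled by the wall-crossing case analysis $w_n(n-1)\in\{(n-1)_+,\,s_1(n-1)_+,\,s_0(n-1)_+\}$ and the tail sum handled via your ``evaluation lemma'' $(\mathcal Jf)_{vc}=\tau^{-\ell(v)}(I_{v^{-1}}f)_c$, which is exactly the identity $\tau^{-\ell(v)}(I_vf)_{vn}=(\mathcal Jf)_n$ (for any $v\in W_S$ with $vn=n_+$) that the paper invokes inline. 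The only cosmetic difference is that the paper writes the boundary reductions as $g_{-1}=\tau(Ig)_1$ and $g_{M+1}=\tau(I_{s_0}g)_{M-1}$ rather than your fixed-point form at $0$ and $M$; these are equivalent and serve the same purpose.
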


It is immediate from Propositions \ref{bijective:prp} and \ref{intertwining:prp} that the double affine Hecke algebra relations in Theorem \ref{DAHA-rep1:thm} follow from those in Theorem \ref{DAHA-rep2:thm}. In other words, the operator
$\mathcal{J}:\mathcal{C}(\mathbb{Z})\to\mathcal{C}(\mathbb{Z})$ in Eq. \eqref{Jop}
provides an explicit intertwining operator between both representations of the double affine Hecke algebra:
\begin{equation}
\hat{T}(h)\mathcal{J}=\mathcal{J}I(h)\quad (\forall h\in\mathbb{H}).
\end{equation}

\subsection{Proof of Proposition \ref{bijective:prp}}
We need to show that $\mathcal{J}:\mathcal{C}(\mathbb{Z})\to\mathcal{C}(\mathbb{Z})$ is a bijection, which is immediate from the following triangularity property:
\begin{equation}\label{tria}
(I_{w_n}f)_{n_+}=\tau^{-\ell (w_n)}f_n+\sum_{k\in\mathbb{Z},\, k\prec n} * f_k\qquad (f\in\mathcal{C}(\mathbb{Z}),\, n\in\mathbb{Z}),
\end{equation}
with $k\prec n$ iff either (i) $w_k<w_n$ or (ii) $w_k=w_n$ and $|k|<|n|$. Here and below the star symbols
$*$ refer to the expansion coefficients of lower terms (with respect to the partial order $\prec$)
whose precise values are not relevant for the argument of the proof. Indeed, it is clear from the triangularity in Eq. \eqref{tria} that for any $g\in\mathcal{C}(\mathbb{Z})$
the linear equation $(\mathcal{J}f)_n=g_n$ ($n\in\mathbb{Z}$) can be uniquely solved
by induction in $n$ with respect to $\prec$.
We will now prove the triangularity in question by induction on $\ell (w_n)$ starting from the straightforward case
that $\ell (w_n)\leq 1$. (The case $\ell (w_n)=0$ is in fact trivial since then $n\in\Lambda_M$ and $(I_{w_n}f)_{n_+}=f_n$).)
To this end we first observe that for $\ell (w_n)=1$ the
triangularity in Eq. \eqref{tria} is a direct consequence of the following lemma.
\begin{lemma}\label{Is-tria:lem}
For $i\in \{0,1\}$, the action of $I_{s_i}$  on $f\in\mathcal{C}(\mathbb{Z})$ is of the form
\begin{equation*}\label{Isi}
(I_{s_i}f)_n=
\begin{cases}\displaystyle
\tau^{-1} f_{s_i n}+\sum_{\substack{k\in\mathbb{Z} \\ w_k<w_{s_i n}}} *f_k
+ \sum_{\substack{k\in\mathbb{Z},\, |k|<|s_i n| \\ w_k=w_{s_i n}}} *f_k &\text{if}\ \ell (w_{s_i n})>\ell (w_{n}),\\
\displaystyle\qquad\qquad\quad
\sum_{\substack{k\in\mathbb{Z} \\ w_k<w_{n}}} *f_k
+ \sum_{\substack{k\in\mathbb{Z},\, |k|\leq |n| \\ w_k=w_{n}}} *f_k &\text{if}\ \ell (w_{s_i n})\leq \ell (w_{n}) .
\end{cases}
\end{equation*}
\end{lemma}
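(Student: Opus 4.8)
The plan is to reduce the whole statement to an explicit computation of the two operators $I_{s_0}$ and $I_{s_1}$ and then to read off the triangular structure directly from the resulting finite sums. Since $I_{s_1}=I=\tau s+(\tau-\tau^{-1})J$ (and $I_{s_0}=I(UTU)=uIu$), the first thing I would do is substitute the definition of $J$ from Eq.~\eqref{Jdef} and record $(I_{s_1}f)_n=(If)_n$ in the three regimes. A short calculation gives, for $n>0$, that $(If)_n=\tau^{-1}f_{-n}-(\tau-\tau^{-1})(f_{n-2}+f_{n-4}+\cdots+f_{-n+2})$; for $n=0$, that $(If)_0=\tau f_0$; and for $n<0$, that $(If)_n=\tau f_{-n}+(\tau-\tau^{-1})(f_n+f_{n+2}+\cdots+f_{-n-2})$. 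The crucial structural feature is that in every case the support of $(If)_n$ consists of indices $k$ with $|k|\le |n|$, together with the single index $-n=s_1 n$.

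The second step is to package the combinatorics of the group elements $w_k$. From Eq.~\eqref{wn} the element $w_k$ is constant on each of the ``blocks'' $(pM,(p+1)M]$ and $[-pM,-(p-1)M)$ and equals $1$ on $\Lambda_M$, its length increasing by one from one block to the next as $|k|$ grows; consequently $\ell(w_{-n})=\ell(w_n)+1$ for $n>0$, $\ell(w_{-n})=\ell(w_n)-1$ for $n<0$, and $\ell(w_0)=0$. Moreover, since all the $w_k$ lie in $W_S$, the partial Bruhat order collapses to a comparison of lengths: $w_k<w_{k'}$ iff $\ell(w_k)<\ell(w_{k'})$. These two facts supply the dictionary I need to classify each surviving index $k$: if $\ell(w_k)$ is strictly below the length of the leading index then $k$ contributes to the first (Bruhat-lower) sum, whereas if the lengths coincide then $k$ must lie in the same block as the leading index, so that $w_k$ equals the leading $w$ and $k$ contributes to the second sum with the appropriate inequality on $|k|$.

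With this dictionary the two cases of the lemma for $i=1$ follow by inspection. For $n>0$ one has $\ell(w_{s_1 n})=\ell(w_{-n})=\ell(w_n)+1>\ell(w_n)$, so the first alternative applies; the leading term $\tau^{-1}f_{-n}$ is exactly $\tau^{-1}f_{s_1 n}$, and each remaining index $k\in\{-n+2,\dots,n-2\}$ satisfies $|k|\le n-2<n=|s_1 n|$ together with $\ell(w_k)\le\ell(w_{-n})$, equality occurring only for the negative $k$ lying in the same block as $-n$ (where $w_k=w_{-n}$). For $n\le 0$ one has $\ell(w_{s_1 n})\le\ell(w_n)$, giving the second alternative, and the same length bookkeeping shows that every surviving $k$ obeys either $\ell(w_k)<\ell(w_n)$ or $w_k=w_n$ with $|k|\le|n|$.

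Finally, I would treat $i=0$ by the same explicit computation, using that $u$ is the reflection $n\mapsto M-n$ interchanging the generators ($us_1u=s_0$) and the boundaries ($0\leftrightarrow M$). Computing $I_{s_0}=uIu$ gives, for $n<M$, the expression $(I_{s_0}f)_n=\tau^{-1}f_{s_0 n}-(\tau-\tau^{-1})(f_{n+2}+\cdots+f_{s_0 n-2})$ with $s_0 n=2M-n$, and an analogous formula for $n\ge M$, so the identical length-and-block analysis applies with $s_0 n$ in place of $-n$. Here one uses $w_{un}=u w_n u$, which has the same length as $w_n$, whence $\ell(w_{s_0 n})=\ell(w_n)+1$ for $n<M$ and $\ell(w_{s_0 n})\le\ell(w_n)$ for $n\ge M$, mirroring the $i=1$ dichotomy; one also checks that the surviving indices satisfy the stated inequality $|k|<|s_0 n|$ (and not merely the $M$-centered inequality that a naive conjugation argument would produce). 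The hard part throughout will be the bookkeeping at the block boundaries: I must verify that \emph{no} surviving index has length exceeding that of the leading index, and that the boundary terms of equal length are correctly assigned to the second ($w_k=w_{s_i n}$) sum — which is precisely where the block description of $w_k$ from Eq.~\eqref{wn} is indispensable.
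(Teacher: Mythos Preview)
Your proposal is correct and follows essentially the same route as the paper: write out $(I_{s_1}f)_n$ and $(I_{s_0}f)_n=(uIuf)_n$ explicitly from Eqs.~\eqref{Idef}--\eqref{Jdef}, then read off the triangular structure using the block description of $w_k$ in Eq.~\eqref{wn} together with the equivalences $n>0\iff\ell(w_{s_1 n})>\ell(w_n)$ and $n<M\iff\ell(w_{s_0 n})>\ell(w_n)$. Your added remarks---that the paper's partial Bruhat order on $W_S$ reduces to length comparison, and that for $i=0$ one must check the absolute-value inequality $|k|<|s_0 n|$ rather than an $M$-centered one---are accurate and make explicit what the paper leaves to the reader.
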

\begin{proof}
From the formula for the
action in Eqs. \eqref{Idef}, \eqref{Jdef} and its conjugation with the action of $u$ one deduces that
\begin{equation*}
(I_{s_1}f)_n=(If)_n=
  \begin{cases}
\tau^{-1}f_{-n} - (\tau-\tau^{-1})\displaystyle\sum_{k=1}^{n-1}f_{n-2k} & \text{if $n>0$}, \\
\tau f_{-n} + (\tau-\tau^{-1})\displaystyle\sum_{k=0}^{-n-1}f_{n+2k} & \text{if $n\le 0$},
  \end{cases}
\end{equation*}
and
\begin{equation*}
(I_{s_0}f)_n=(uIuf)_n=
  \begin{cases}
\tau^{-1}f_{2M-n} - (\tau-\tau^{-1})\displaystyle\sum_{k=1}^{M-n-1}f_{n+2k} & \text{if $n<M$}, \\
\tau f_{2M-n} + (\tau-\tau^{-1})\displaystyle\sum_{k=0}^{n-M-1}f_{n-2k} & \text{if $n\ge M$} .
  \end{cases}
\end{equation*}
These explicit formulas are readily seen to imply the lemma (upon employing the formula in Eq. \eqref{wn} to determine the lengths of
the relevant group elements $w_{n-2k}$ and $w_{n+2k}$, together with the equivalences
$n>0\iff \ell(w_{s_1n})>\ell(w_n)$ and
$n<M \iff \ell(w_{s_0n})>\ell(w_n)$).
\end{proof}
Now let us assume that $n\in\mathbb{Z}$ with $\ell (w_n)\geq 1$ is such that
the triangularity in Eq. \eqref{tria} holds for all
$k\in\mathbb{Z}$ with $\ell (w_k)\leq \ell (w_n)$, and let
$m=s_i n$ with $\ell(w_m)=\ell(w_n)+1$  (so $w_m=w_ns_i$). Our induction hypothesis guarantees that
\begin{align}\label{Im}
(I_{w_m}f)_{m_+}&=(I_{w_n}I_{s_i}f)_{n_+} \\
&=
\tau^{-\ell (w_n)} (I_{s_i} f)_{n}+\sum_{\substack{l\in\mathbb{Z} \\ w_l<w_{n}}} * (I_{s_i}f)_l
+ \sum_{\substack{l\in\mathbb{Z},\, | l |<|n| \\ w_l=w_{n}}} * (I_{s_i}f)_l . \nonumber
\end{align}
Application of Lemma \ref{Is-tria:lem} to all terms on the RHS of Eq. \eqref{Im} shows that
$(I_{w_m}f)_{m_+}$ is equal to $\tau^{-\ell (w_m)}f_m$ plus a linear combination of terms involving
evaluations $f_k$ with $k\prec m$, which completes the induction step (and thus the proof of the proposition).
Indeed, one has that (i) $\tau^{-\ell (w_n)} (I_{s_i} f)_{n}=\tau^{-\ell (w_n)-1} f_{s_in}+\text{lower\, terms}=\tau^{-\ell (w_m)} f_{m}+\text{lower\, terms}$,
(ii) if $w_l<w_n$ then $w_{s_i l}\leq w_l s_i< w_ns_i=w_{s_in}=w_{m}$, and (iii) if $w_l=w_n$ with $| l | < | n |$ then $\ell (w_l s_i)=\ell (w_l)+1$ and $|s_i l |<|s_in|$, whence $w_{s_i l}=w_l s_i=w_ns_i=w_m$ and $|s_i l|< |m|$.

\subsection{Proof of Proposition \ref{intertwining:prp}}
Let $f\in\mathcal{C}(\mathbb{Z})$ and $n\in\mathbb{Z}$.

\subsubsection{Proof of the relation $\hat{T}\mathcal{J}=\mathcal{J}I$}
Straightforward manipulations reveal that
\begin{align*}
(\hat{T}\mathcal{J}f)_n&=
\tau (\mathcal{J}f)_n+\tau^{\text{sign}(n)}\bigl( (\mathcal{J}f)_{-n}-(\mathcal{J}f)_n\bigr) \\
&\stackrel{(i)}{=} \tau^{1-\ell (w_n)} (I_{w_n}f)_{n_+}+\tau^{\text{sign}(n)}\bigl( \tau^{-\ell (w_ns)} (I_{w_ns}f)_{n_+}-\tau^{-\ell (w_n)} (I_{w_n}f)_{n_+}\bigr)\\
&\stackrel{(ii)}{=} \tau^{-\ell (w_n)}\Bigl( (I_{w_ns}f)_{n_+}+\frac{1}{2}(1-\eta (w_n))(\tau-\tau^{-1})(I_{w_n}f)_{n_+}\Bigl)\\
&\stackrel{(iii)}{=}\tau^{-\ell (w_n)}(I_{w_n}If)_{n_+}=(\mathcal{J}If)_n
\end{align*}
(with $\eta:W\to \{-1,1\}$ as defined in Eq. \eqref{eta-def}).
In Steps $(i)$ and $(ii)$ we exploited that for $n\neq 0$:
$w_{-n}=w_ns$  and  $\ell (w_ns)=\ell (w_n)+\text{sign}(n)$, whereas for $n=0$:
$\tau^{-\ell (w_{n})} (I_{w_{n}}f)_{n_+}=f_0= \tau^{-1}(If)_0=\tau^{-\ell (w_ns)} (I_{w_ns}f)_{n_+}$.
Step $(iii)$ hinges in turn on the Hecke algebra relation
in Eq. \eqref{TwT}.

\subsubsection{Proof of the relation $u\mathcal{J}=\mathcal{J}u$}
It is immediate from the definitions that
\begin{align*}
(u\mathcal{J}f)_n&=(\mathcal{J}f)_{un}=\tau^{-\ell (w_{un})}(I_{w_{un}}f)_{w_{un}un} \\
& =
\tau^{-\ell (w_{n})}(uI_{w_{n}}uf)_{un_+}=\tau^{-\ell (w_{n})}(I_{w_{n}}uf)_{n_+}=(\mathcal{J}uf)_n
\end{align*}
(where it was used that $w_{un}=uw_nu$ and $I_{uwu}=uI_wu$).

\subsubsection{Proof of the relation $\hat{X}\mathcal{J}=\mathcal{J}D$}
The proof hinges on the Hecke algebra relation of Proposition \ref{Xaction:prp} (with $r=0$ and $\varepsilon =1$):
\begin{align*}
(\mathcal{J}Df)_n =&\tau^{-\ell (w_n)}(I_{w_n}Df)_{n_+}\\ =& \tau^{-\ell (w_n)} (D^{(-1)^{\ell (w_n)}}I_{w_n}f)_{n_+}\\
& +\eta(w_n) \tau^{-\ell (w_n)}
\sum_{v\in W,\, v< w_n} a(\ell (w_n)-\ell(v))(D^{\eta(w_n)(-1)^{\ell (w_n)}} I_{v}f)_{n_+} \\
\stackrel{(i)}{=}& \tau^{-\ell (w_n)} (I_{w_n}f)_{n_+ -(-1)^{\ell (w_n)}}\\
& +\text{sign}(n) \tau^{-\ell (w_n)}
\sum_{v\in W,\, v< w_n} a(\ell (w_n)-\ell(v)) (I_{v}f)_{n_+ -\text{sign}(n)(-1)^{\ell (w_n)}}  \\
\stackrel{(ii)}{=}& \tau^{(\ell (w_{n-1})-\ell (w_n))(1+\eta(w_{un}))} (\mathcal{J}f)_{n-1}\\
& +\text{sign}(n)
\sum_{v\in W,\, v< w_n} \tau^{-(\ell (w_n)-\ell (v))}a(\ell (w_n)-\ell(v)) (\mathcal{J}f)_{v^{-1}(n-\text{sign}(n))_+ }  \\
=& (\hat{X}\mathcal{J}f)_n .
\end{align*}
Here we used in Step $(i)$ that $\eta (w_n)=\text{sign}(n)$ for $n\neq 0$, and
in Step $(ii)$ that
\begin{align*}
n_+-(-1)^{\ell (w_n)}&=w_n(n-1)\\
&=\begin{cases}
s (n-1)_+ \, (=-1) &\text{if}\ n\equiv 0\mod 2M\ \ \text{with}\ n\leq 0 ,\\
s_0(n-1)_+ \, (=M+1) &\text{if}\ n\equiv M\mod 2M\ \text{with}\ n<0 , \\
(n-1)_+& \text{otherwise}\end{cases}
\end{align*}
and $n_+ -\text{sign}(n)(-1)^{\ell (w_n)}=w_{n}(n -\text{sign}(n))=(n-\text{sign}(n))_+$.
Notice in this connection that $\ell (w_{n-1})=\ell (w_{n})+1$ if $n\equiv 0\mod M$ with $n\leq 0$, and
that (for any $f\in\mathcal{C}(\mathbb{Z})$):
$f_{-1}=\tau(If)_1$,   $f_{M+1}=\tau (I_{s_0}f)_{M-1}$, and moreover
$\tau^{-\ell (v)}(I_vf)_{vn}=(\mathcal{J}f)_n$ for (any) $v\in W$ with
$vn=n_+$ (since $\tau^{-1}(If)_0=f_0$ and $\tau^{-1}(I_{s_0}f)_M=f_M$).

\section{Unitarity and the Laplacian}\label{sec5}
The assignment $T_w\to T^*_w:=T_{w^{-1}}$ ($w\in W$) extends to an antilinear anti-involution $*$ of
the affine Hecke algebra $H\subset\mathbb{H}$ turning it into a $*$-algebra.
Let $l^2(\mathbb{Z},\delta)$ be the Hilbert space inside $\mathcal{C}(\mathbb{Z})$ determined
by the inner product
\begin{equation}
\langle  f , g\rangle_\delta :=\sum_{n\in\mathbb{Z}} f_n \bar{g}_n \delta_n,\qquad \delta_n:=\tau^{2\ell (w_n)}
\end{equation}
(where the bar refers to the complex conjugate).

\begin{proposition}
The difference-reflection representation $h\to \hat{T}(h)$ ($h\in \mathbb{H}$) restricts to a unitary representation
of the affine Hecke algebra $H\subset \mathbb{H}$
into the space of bounded operators on $l^2(\mathbb{Z},\delta)$, i.e.
\begin{equation}
\langle \hat T(h)f,g\rangle_\delta=
\langle f,\hat T(h^*)g\rangle_\delta,\quad (h\in H,\, f,g\in l^2(\mathbb{Z},\delta)) .
\end{equation}
\end{proposition}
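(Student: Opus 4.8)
The plan is to reduce the asserted $*$-compatibility to the two algebra generators of $H$ and then verify it by an explicit, essentially two-dimensional, computation. Since $*$ is an anti-involution, $h\mapsto\hat{T}(h)$ is an algebra homomorphism, and the Hilbert-space adjoint on $l^2(\mathbb{Z},\delta)$ is itself an anti-involution, the relation $\hat{T}(h)^*=\hat{T}(h^*)$ propagates through products; moreover boundedness of all $\hat{T}(h)$ will follow once it is known for the generators, as $H=\langle T,U\rangle$. Because $s$ and $u$ are involutions one has $T^*=T_{s^{-1}}=T_s=T$ and $U^*=T_{u^{-1}}=T_u=U$, so both generators are $*$-self-adjoint, and the task reduces to proving that $\hat{T}$ and $u$ are self-adjoint (and bounded) on $l^2(\mathbb{Z},\delta)$.

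For $u$ I would compute $(uf)_n=f_{M-n}$ and perform the substitution $n\mapsto M-n$ in $\langle uf,g\rangle_\delta$; the identity $\langle uf,g\rangle_\delta=\langle f,ug\rangle_\delta$ then follows at once provided the weight is $u$-invariant, i.e. $\delta_{M-n}=\delta_n$. This invariance is exactly $\ell(w_{un})=\ell(w_n)$, which holds because $w_{un}=uw_nu$ and conjugation by the length-zero element $u\in\Omega$ preserves the length in $W=\Omega\ltimes W_S$. Since $u$ is moreover an involution it is a unitary, and in particular bounded.

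For $\hat{T}$ I would read off its (real) matrix entries in the standard basis from the case distinction in its definition: $\hat{T}_{0,0}=\tau$; $\hat{T}_{n,-n}=\tau$ for $n>0$; and $\hat{T}_{n,-n}=\tau^{-1}$, $\hat{T}_{n,n}=\tau-\tau^{-1}$ for $n<0$. Self-adjointness with respect to $\delta$ amounts to the weighted-symmetry condition $\hat{T}_{n,m}\delta_n=\hat{T}_{m,n}\delta_m$ for all $m,n$. Diagonal entries satisfy this trivially, so the only case to check is the reflected pair $(n,-n)$ with $n>0$: the left-hand side is $\tau\delta_n$, while the right-hand side is $\hat{T}_{-n,n}\delta_{-n}=\tau^{-1}\delta_{-n}$, and the two agree precisely because $\delta_{-n}=\tau^{2\,\text{sign}(n)}\delta_n=\tau^2\delta_n$, that is $\ell(w_{-n})=\ell(w_n)+1$ for $n>0$, as recorded in Section \ref{sec4}.

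Boundedness of $\hat{T}$ follows from the observation that it leaves invariant the finite-dimensional subspaces $\mathbb{C}e_0$ and the two-dimensional subspaces spanned by $e_n$ and $e_{-n}$ ($n>0$), so it is block-diagonal; on each block it is self-adjoint, and by the quadratic relation $(\hat{T}-\tau)(\hat{T}+\tau^{-1})=0$ its spectrum is contained in $\{\tau,-\tau^{-1}\}$, yielding the uniform operator-norm bound $\tau^{-1}$ across all blocks. The point requiring the most care is precisely the weight-compatibility of the off-diagonal reflection part of $\hat{T}$, since this is where the nontrivial length bookkeeping $\ell(w_{-n})=\ell(w_n)+\text{sign}(n)$ enters and must exactly cancel the asymmetry between the coefficients $\tau$ and $\tau^{-1}$; everything else is a routine matrix computation.
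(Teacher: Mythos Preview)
Your proof is correct and follows essentially the same approach as the paper: reduce to the generators $U$ and $T$ (both $*$-fixed), verify self-adjointness of $u$ via the invariance $u\delta=\delta$, and verify self-adjointness of $\hat{T}$ via the key weight relation $\delta_{-n}=\tau^{2\,\text{sign}(n)}\delta_n$ (the paper writes this as $s\delta=\tau^{2\text{sign}}\delta$ and manipulates operators rather than matrix entries, but the content is identical). The only cosmetic difference is your boundedness argument for $\hat{T}$: the paper bounds $s$ directly in $l^2(\mathbb{Z},\delta)$ and then controls the linear combination $\tau+\tau^{\text{sign}}(s-1)$, whereas you exploit the block-diagonal structure together with the quadratic relation to pin down the spectrum in $\{\tau,-\tau^{-1}\}$; both routes yield the same norm bound $\tau^{-1}$.
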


\begin{proof}
Clearly it is sufficient to prove the proposition for the generators
$u$ and $\hat{T}=\tau+\tau^{\text{sign}}(s-1)$.
The corresponding operators
are bounded in $l^2(\mathbb{Z},\delta)$ because
$\langle uf,uf\rangle_\delta=\langle f,f\rangle_\delta$ as $u\delta=\delta$ and
$\langle sf,sf\rangle_\delta=\langle \tau^{2\text{sign}} f,f\rangle_\delta$ as $s\delta=\tau^{2\text{sign}}\delta$.
(Notice in this connection also that
$\langle \tau^{\text{sign}} f,f\rangle_\delta\leq \tau^{-1}\langle f,f\rangle_\delta $.)
In an analogous manner it is seen that
$\langle uf,g\rangle_\delta = \langle f,ug\rangle_\delta$ and that
$\langle \hat{T}f,g\rangle_\delta=\langle (\tau-\tau^{\text{sign}})f +\tau^{\text{sign}}sf,g\rangle_\delta=
\langle f,  (\tau-\tau^{\text{sign}})g+\delta^{-1}s\tau^{\text{sign}}\delta g \rangle_\delta=
\langle f,\hat{T}g\rangle_\delta$ (using also that $s\tau^{\text{sign}}=\tau^{-\text{sign}}s$).
\end{proof}

Let us define the {\em Laplacian} on $ \mathcal{C}(\mathbb{Z})$ associated with the (center of the) double affine Hecke algebra
$\mathbb{H}$ as the operator
$L:\mathcal{C}(\mathbb{Z})\to \mathcal{C}(\mathbb{Z})$ of the form
\begin{equation}\label{Lop}
L:=\hat{T}(X+X^{-1})=\hat{X}+\hat{X}^{-1} .
\end{equation}
It is immediate from the action of $\hat{X}$ and $\hat{X}^{-1}$ in Eqs. \eqref{Xhat} and  \eqref{Xhat-inv},
that the Laplacian $L$ \eqref{Lop}
is a discrete difference operator whose explicit action on $f\in\mathcal{C}(\mathbb{Z})$
reads
\begin{equation}\label{Laction:eq}
(Lf)_n= \tau^{(\ell (w_{n+1})-\ell (w_n))(1+\eta(w_{n}))}f_{n+1}+
\tau^{(\ell (w_{n-1})-\ell (w_n))(1+\eta(w_{un}))}f_{n-1}
\end{equation}
($n\in\mathbb{Z}$), or equivalently:
\begin{subequations}
\begin{equation}\label{Laction:a}
(Lf)_n=a_nf_{n+1}+b_nf_{n-1}\qquad (n\in\mathbb{Z}),
\end{equation}
with
\begin{equation}\label{Laction:b}
a_n=\begin{cases} \tau^2 &\text{if}\ n\in M\mathbb{Z}_{>0}, \\ 1 & \text{otherwise} ,\end{cases}\qquad
b_n=\begin{cases} \tau^2 &\text{if}\ n\in M \mathbb{Z}_{\leq 0}, \\ 1 & \text{otherwise} .\end{cases}
\end{equation}
\end{subequations}
For $\tau\to 1$, the action of $L$ \eqref{Lop} degenerates to the action of the standard (undeformed)
discrete Laplacian on $\mathcal{C}(\mathbb{Z})$ (shifted by an additive constant  such that
the diagonal terms vanish).

\begin{proposition}\label{symmetry:prp}
The Laplacian $L$ \eqref{Lop}
constitutes a (bounded) self-adjoint operator on $l^2(\mathbb{Z},\delta)$, i.e.
\begin{equation}
\langle Lf,g\rangle_\delta =\langle f, Lg\rangle_\delta \qquad (f,g\in l^2(\mathbb{Z},\delta)).
\end{equation}
\end{proposition}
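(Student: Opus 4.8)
The plan is to avoid working inside $\mathbb{H}$ and instead argue directly from the explicit tridiagonal action of $L$ recorded in Eqs.~\eqref{Laction:a}--\eqref{Laction:b}, since the weight $\delta_n=\tau^{2\ell(w_n)}$ and the hopping coefficients $a_n,b_n$ are all known in closed form. First I would decouple the problem into a single recurrence identity. Writing $L=A_++A_-$ with $(A_+f)_n:=a_nf_{n+1}$ and $(A_-f)_n:=b_nf_{n-1}$, a shift of the summation index shows that $A_+^*=A_-$ on $l^2(\mathbb{Z},\delta)$ precisely when
\begin{equation}\label{coeff-id}
a_n\delta_n=b_{n+1}\delta_{n+1}\qquad (n\in\mathbb{Z}),
\end{equation}
because the coefficient of $f_m\bar g_{m-1}$ in $\langle A_+f,g\rangle_\delta$ is $a_{m-1}\delta_{m-1}$ while the matching coefficient in $\langle f,A_-g\rangle_\delta$ is $b_m\delta_m$ (the numbers $a_n,b_n\in\{1,\tau^2\}$ being real). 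Granting \eqref{coeff-id} one has $L=A_++A_+^*$, which is self-adjoint as soon as it is bounded, so the asserted symmetry follows at once.

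The core of the argument is thus the verification of \eqref{coeff-id}, equivalently $a_n/b_{n+1}=\tau^{2(\ell(w_{n+1})-\ell(w_n))}$. Here I would read off from the length formula \eqref{wn} that $\ell(w_n)$ is piecewise constant, taking the value $p$ on the block $\{pM+1,\dots,(p+1)M\}$ for $p\ge 0$ and the value $p+1$ on the block $\{-(p+1)M,\dots,-pM-1\}$ for $p\ge 0$. Consequently $\ell(w_{n+1})-\ell(w_n)=+1$ exactly when $n\in M\mathbb{Z}_{>0}$, equals $-1$ exactly when $n+1\in M\mathbb{Z}_{\le 0}$, and vanishes otherwise. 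Comparing with \eqref{Laction:b}, the exponent on the right of \eqref{coeff-id} is nonzero precisely on those two families of sites, and on each it matches the ratio on the left: for $n\in M\mathbb{Z}_{>0}$ one has $a_n/b_{n+1}=\tau^2/1=\tau^{2}$, for $n+1\in M\mathbb{Z}_{\le 0}$ one has $a_n/b_{n+1}=1/\tau^2=\tau^{-2}$, and elsewhere $a_n/b_{n+1}=1=\tau^0$. This is a finite case check; the point to get right is that the weight jumps of $\delta$ sit exactly at the lattice sites where $a_n$ or $b_{n+1}$ departs from $1$, and with the correct sign.

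Finally I would dispatch boundedness, which upgrades symmetry to self-adjointness for an everywhere-defined operator. Since $a_n,b_n\in\{1,\tau^2\}$ are uniformly bounded and consecutive weights satisfy $\delta_n\le\tau^{-2}\delta_{n\pm1}$ (as $|\ell(w_{n+1})-\ell(w_n)|\le 1$), a reindexing gives $\|A_\pm f\|_\delta^2\le\tau^{-2}\|f\|_\delta^2$, so $L=A_++A_-$ is bounded on $l^2(\mathbb{Z},\delta)$. The main obstacle is purely bookkeeping: correctly tracking the piecewise-constant behaviour of $\ell(w_n)$ through \eqref{wn}, in particular across the transitional sites $n=0$ and $n=M$ where the positive- and non-positive-index regimes of \eqref{wn} meet; once \eqref{coeff-id} is in hand the conclusion is immediate.
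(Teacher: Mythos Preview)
Your proposal is correct and follows essentially the same route as the paper: the paper's proof reduces the self-adjointness to the single identity $a_n\delta_n=b_{n+1}\delta_{n+1}$ via the same index shift, and dismisses boundedness as manifest from Eqs.~\eqref{Laction:a}--\eqref{Laction:b}. Your version simply makes explicit the casework verifying that identity and the norm bound; the decomposition $L=A_++A_+^*$ is a harmless repackaging of the same computation.
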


\begin{proof}
It is manifest from Eqs. \eqref{Laction:a}, \eqref{Laction:b}
that the operator $L$ is bounded on $l^2(\mathbb{Z},\delta)$.
Moreover, one has that
\begin{align*}
&\langle Lf,g\rangle_\delta=\sum_{n\in\mathbb{Z}} (a_nf_{n+1}+b_nf_{n-1})\bar{g}_n\delta_n\\
&=
\sum_{n\in\mathbb{Z}} f_n(a_{n-1}\bar{g}_{n-1}\delta_{n-1}+b_{n+1}\bar{g}_{n+1}\delta_{n+1})\\
&=
\sum_{n\in\mathbb{Z}} f_n(a_{n}\bar{g}_{n+1}+b_{n}\bar{g}_{n-1})\delta_{n}=
\langle f,Lg\rangle_\delta ,
\end{align*}
where it was used that $a_n\delta_{n}=b_{n+1}\delta_{n+1}$ ($\forall n\in\mathbb{Z}$).
\end{proof}

\section{Spherical function}\label{sec6}
It is well-known that the eigenfunctions of the (standard) discrete
Laplacian in
$\mathcal{C}(\mathbb{Z})$
restricted to the finite-dimensional invariant
subspace of $W_S$-symmetric functions are given by  the elements of the (appropriate)
discrete Fourier-cosine basis \cite{stra:discrete} (cf. also Remark \ref{tau=1} below).  For our deformed Laplacian $L$ \eqref{Lop}, the role of the cosine function will be taken over by a spherical function associated with the double affine Hecke algebra.

Parameterized by an auxiliary spectral variable $\xi\in \mathbb{R}$,  we define the {\em spherical function} in $\mathcal{C}(\mathbb{Z})$ associated with $\mathbb{H}$ as
\begin{equation}\label{spherical-function}
\Phi_\xi := \mathcal{J} \phi_\xi\quad \text{with}\quad \phi_\xi:=(1+\tau I)\mathbf{e}^{i\xi},
\end{equation}
where $\mathbf{e}^{i\xi}\in\mathcal{C}(\mathbb{Z})$ represents the plane wave function
$\mathbf{e}^{i\xi}(n):=e^{i\xi n}$, $n\in\mathbb{Z}$.
From the explicit action of $I$ \eqref{Idef}, \eqref{Jdef} on $\mathbf{e}^{i\xi}$:
$$
I\mathbf{e}^{i \xi}=\tau \mathbf{e}^{-i \xi} +
(\tau-\tau^{-1}) \frac{\mathbf{e}^{i\xi}-\mathbf{e}^{-i \xi}}{1-e^{2i \xi }} ,
$$
it is readily seen that $\phi_\xi$  can be expressed
as a linear combination of plane waves
\begin{subequations}
\begin{equation}\label{decomp1}
\phi_\xi(n) = c(\xi)e^{i\xi n}+ c(-\xi)e^{-i\xi n}  ,
\end{equation}
with
\begin{equation}\label{cf}
 c (\xi):= \frac{1-\tau^2 e^{-2i  \xi }}{1-e^{-2i\xi}} .
\end{equation}
\end{subequations}
This representation of $\phi_\xi$ can be conveniently rewritten as a linear combination of two Chebyshev polynomials of the second kind: $\phi_\xi(n)=U_n(\cos(\xi))+\tau^2 U_{-n}(\cos(\xi))$ with $U_n(\cos(\xi))=\sin ((n+1)\xi)/\sin (\xi)$, whence it is a polynomial of degree $|n|$ in  $\cos (\xi)$. These trigonometric polynomials in the spectral parameter are often referred to as the
one-dimensional Hall-Littlewood polynomials. It is manifest from the above explicit expressions that $\phi_\xi$ and thus also $\Phi_\xi$ are real-valued:
$\overline{\phi_\xi} = \phi_\xi$, $\overline{\Phi_\xi} = \Phi_\xi$.

Let $\mathcal{C}(\mathbb{Z})^{W}:=\{ f\in \mathcal{C}(\mathbb{Z})\mid wf=f,\ \forall w\in W\}$ and
$\mathcal{C}(\mathbb{Z})^{W_S}:=\{ f\in \mathcal{C}(\mathbb{Z})\mid wf=f,\ \forall w\in W_S\}$.

\begin{proposition}\label{Winvariance:prp}
The spherical function $\Phi_\xi$ \eqref{spherical-function} lies in
the subspace $\mathcal{C}(\mathbb{Z})^{W_S}$ of  $W_S$-invariant  functions if
the spectral parameter $\xi\in\mathbb{R}$ satisfies the equation
\begin{equation}\label{BAE}
e^{iM\xi} = \varepsilon \left( \frac{1-\tau^2 e^{2i\xi}}{\tau^2-e^{2i\xi}} \right) ,\qquad \varepsilon\in \{ 1,-1\} ,
\end{equation}
with the case $\varepsilon =1$ corresponding to the situation that
$\Phi_\xi\in \mathcal{C}(\mathbb{Z})^{W}\subset \mathcal{C}(\mathbb{Z})^{W_S}$.
\end{proposition}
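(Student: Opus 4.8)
The plan is to convert the geometric $W_S$-invariance of $\Phi_\xi=\mathcal{J}\phi_\xi$ into a pair of eigenvalue equations for the integral-reflection operator $I$ acting on $\phi_\xi$, and then to read off \eqref{BAE} from the plane-wave expansion \eqref{decomp1}. First I would record the elementary equivalence $sg=g\iff \hat{T}g=\tau g$ for $g\in\mathcal{C}(\mathbb{Z})$, which is immediate from $\hat{T}=\tau+\tau^{\text{sign}}(s-1)$ and the pointwise invertibility of $\tau^{\text{sign}}$; conjugating by $u$ gives $s_0g=g\iff \hat{T}_0g=\tau g$ with $\hat{T}_0:=u\hat{T}u$. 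Since $W_S=\langle s_0,s_1\rangle$ (with $s_1=s$), membership $\Phi_\xi\in\mathcal{C}(\mathbb{Z})^{W_S}$ is equivalent to the two conditions $\hat{T}\Phi_\xi=\tau\Phi_\xi$ and $\hat{T}_0\Phi_\xi=\tau\Phi_\xi$. Feeding these through the intertwining relations of Proposition \ref{intertwining:prp} ($\hat{T}\mathcal{J}=\mathcal{J}I$ and $u\mathcal{J}=\mathcal{J}u$, whence $\hat{T}_0\mathcal{J}=\mathcal{J}I_0$ with $I_0:=uIu$), together with the bijectivity of $\mathcal{J}$ (Proposition \ref{bijective:prp}), turns them into the purely algebraic requirements $I\phi_\xi=\tau\phi_\xi$ and $I_0\phi_\xi=\tau\phi_\xi$ on the trigonometric side.

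The first requirement is automatic. Indeed, $\phi_\xi=(1+\tau I)\mathbf{e}^{i\xi}$, and the quadratic relation $I^2=(\tau-\tau^{-1})I+1$ from \eqref{DAHA-relations2} gives $I(1+\tau I)=\tau(1+\tau I)$, so $\phi_\xi$ already lies in the $\tau$-eigenspace of $I$; this reflects the fact that $1+\tau I$ is (up to normalization) the Hecke symmetrizer onto that eigenspace. Consequently all the content sits in the second requirement. Applying $u$ and using $u^2=1$, the condition $I_0\phi_\xi=\tau\phi_\xi$ is equivalent to $I(u\phi_\xi)=\tau(u\phi_\xi)$, i.e.\ to $u\phi_\xi$ lying in the same $\tau$-eigenspace of $I$. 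For generic $\xi$ this eigenspace is one-dimensional inside $\mathrm{span}(\mathbf{e}^{i\xi},\mathbf{e}^{-i\xi})$ and is spanned by $\phi_\xi$, so the condition collapses to the proportionality $u\phi_\xi\parallel\phi_\xi$.

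It then remains to compute this proportionality explicitly. Since $u\mathbf{e}^{i\xi}=e^{iM\xi}\mathbf{e}^{-i\xi}$, the expansion \eqref{decomp1} yields $u\phi_\xi=c(-\xi)e^{-iM\xi}\mathbf{e}^{i\xi}+c(\xi)e^{iM\xi}\mathbf{e}^{-i\xi}$, and comparing coefficient ratios against $\phi_\xi=c(\xi)\mathbf{e}^{i\xi}+c(-\xi)\mathbf{e}^{-i\xi}$ gives $e^{2iM\xi}=(c(-\xi)/c(\xi))^2$. A short manipulation of \eqref{cf} shows $c(-\xi)/c(\xi)=(1-\tau^2e^{2i\xi})/(\tau^2-e^{2i\xi})$, so extracting the square root produces exactly the two branches $e^{iM\xi}=\varepsilon(1-\tau^2e^{2i\xi})/(\tau^2-e^{2i\xi})$ of \eqref{BAE}. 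For the $W$-invariance claim I would note that $u$-invariance of $\Phi_\xi$ is, again by Propositions \ref{bijective:prp} and \ref{intertwining:prp}, equivalent to the stronger relation $u\phi_\xi=\phi_\xi$ (not merely proportionality); matching coefficients shows this holds precisely when $e^{iM\xi}=c(-\xi)/c(\xi)$, i.e.\ the $\varepsilon=1$ branch, whereas $\varepsilon=-1$ gives $u\phi_\xi=-\phi_\xi$. Since $W=\Omega\ltimes W_S$ with $\Omega=\langle u\rangle$, the $\varepsilon=1$ case therefore upgrades $W_S$-invariance to full $W$-invariance.

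The main obstacle is not the final ratio computation, which is routine, but rather the reduction in the first paragraph together with the one-dimensionality invoked in the second: one must be careful at the degenerate spectral values $\xi\in\pi\mathbb{Z}$, where $\mathbf{e}^{i\xi}$ and $\mathbf{e}^{-i\xi}$ become linearly dependent and $c(\xi)$ in \eqref{cf} has a pole. There $\phi_\xi$ has to be understood through its polynomial form $\phi_\xi(n)=U_n(\cos\xi)+\tau^2U_{-n}(\cos\xi)$, and the eigenvector/proportionality argument should be justified by continuity (or as a polynomial identity in $\cos\xi$) rather than by the generic two-dimensional linear algebra.
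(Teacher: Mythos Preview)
Your proof is correct and shares the paper's core ingredients: the quadratic relation giving $I\phi_\xi=\tau\phi_\xi$ automatically, the intertwining relations of Propositions~\ref{bijective:prp} and~\ref{intertwining:prp} to pass between $\Phi_\xi$ and $\phi_\xi$, and the plane-wave expansion~\eqref{decomp1} to read off the Bethe equation. The route differs slightly. You attack $s_0$-invariance head-on via $\hat{T}_0$ and $I_0$, which forces you to argue that the $\tau$-eigenspace of $I$ on $\mathrm{span}(\mathbf{e}^{i\xi},\mathbf{e}^{-i\xi})$ is one-dimensional and then to worry about the degenerate loci $\xi\in\pi\mathbb{Z}$. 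The paper instead observes that since $s_0=usu$, once $s\Phi_\xi=\Phi_\xi$ is established (for all $\xi$), it suffices to ask when $u\Phi_\xi=\varepsilon\Phi_\xi$, which by intertwining reduces directly to $u\phi_\xi=\varepsilon\phi_\xi$; matching plane-wave coefficients then yields~\eqref{BAE} without any eigenspace-dimension argument or square-root extraction. Your detour is harmless (and in fact establishes the converse as a byproduct), but for the ``if'' direction actually stated in the proposition the one-dimensionality is not needed: the implication $u\phi_\xi=\varepsilon\phi_\xi\Rightarrow I(u\phi_\xi)=\tau(u\phi_\xi)$ already follows from $I\phi_\xi=\tau\phi_\xi$, so the degenerate-$\xi$ discussion in your last paragraph can be dropped.
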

\begin{proof}
From the quadratic relation
$T(1+\tau T)=\tau (1+\tau T)$ together with
the intertwining relations in Proposition \ref{intertwining:prp}, it is seen that
$\hat{T}\Phi_\xi=\hat{T}\mathcal{J}\phi_\xi=\mathcal{J}I\phi_\xi=\mathcal{J}I(1+\tau I)\mathbf{e}^{i\xi} =\mathcal{J}\tau (1+\tau I)\mathbf{e}^{i\xi}=\tau\Phi_\xi$,
whence $s\Phi_\xi =\Phi_\xi$ (for any $\xi\in\mathbb{R}$). Moreover, from Propositions \ref{bijective:prp}, \ref{intertwining:prp} it is clear that $u\Phi_\xi=\varepsilon \Phi_\xi$ if and only if $u\phi_\xi=\varepsilon \phi_\xi$ ($\varepsilon\in \{ 1,-1\}$).
The explicit expansion in Eq. \eqref{decomp1}, \eqref{cf}
reveals that this
relation for $\phi_\xi$ holds if $e^{iM\xi}=\varepsilon c(-\xi)/c(\xi)=\varepsilon (1-\tau^2e^{2i\xi})/(\tau^2-e^{2i\xi})$.
\end{proof}

Equation \eqref{BAE} for the spectral parameter turns out to be the simplest case of a Bethe-Ansatz equation
for the discrete Bose gas on the circle studied in Ref. \cite{die:diagonalization} (cf. Remark \ref{connections} below).
We will now describe what the solutions of these Bethe-Ansatz equations in {\em loc. cit.} amount to in the present situation (with proofs included to keep the presentation self-contained).
Let
\begin{subequations}
\begin{equation}
V(\xi ):=M\xi +\theta(\xi ),
\end{equation}
with
\begin{align}
\theta(\xi )&:=(1-\tau^4)\int_0^{2\xi }\frac{dx}{1+\tau^4-2\tau^2\cos x} \\
&=
2\arctan\bigl(\frac{1+\tau^2}{1-\tau^2}\tan \xi \bigr)
=i\log  \Bigr(  \frac{1-\tau^2 e^{2i\xi }}{e^{2i\xi }-\tau^2} \Bigr)  .
\end{align}
\end{subequations}
Here the branches of $\arctan(\cdot )$ and $\log (\cdot)$ are assumed to be chosen such that $\theta$ is quasi-periodic: $\theta(\xi +\pi)=\theta(\xi)+2\pi$ and
$\theta(\xi )$ varies from $-\pi$ to $\pi$ as $\xi$ varies from $-\pi/2$ to $\pi/2$ (which corresponds to the principal
branch).  Our parameter regime  $\tau\in (0,1)$ moreover guarantees that the odd function
 $\theta$ is smooth and strictly monotonously increasing on $\mathbb{R}$.
 The following proposition characterizes those solutions of Eq. \eqref{BAE} that are relevant for our purposes.
\begin{proposition}\label{spectrum:prp}
\begin{itemize}
\item[(i)] For any $m\in\Lambda_M$, the equation $V(\xi)=(m+1)\pi$ has a unique real-valued solution $\xi_m$.
\item[(ii)] We have that  $0< \xi_0<\xi_1<  \cdots <\xi_M <\pi$ and that $\xi_{u m}=\xi_{M-m}=\pi -\xi_m$.
\item[(iii)] The spectral value $\xi_m$ solves Eq. \eqref{BAE} with $\varepsilon =1$ if
$m\in\Lambda^e_M:=2\mathbb{Z}\cap \Lambda_M$ and
with $\varepsilon =-1$ if $m\in\Lambda^o_M:=(2\mathbb{Z}+1)\cap \Lambda_M$.
\end{itemize}
\end{proposition}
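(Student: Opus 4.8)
The plan is to treat $V(\xi)=M\xi+\theta(\xi)$ as the single governing object and to read off all three assertions from its analytic properties. For part (i), I would first note that $\theta$ is smooth and strictly increasing on $\mathbb{R}$ (as already recorded), so $V$ inherits these properties. To upgrade this to a bijection $V:\mathbb{R}\to\mathbb{R}$, I would exploit the quasi-periodicity $\theta(\xi+\pi)=\theta(\xi)+2\pi$, which forces $V(\xi+\pi)=V(\xi)+(M+2)\pi$; iterating shows $V(\xi)\to\pm\infty$ as $\xi\to\pm\infty$, so that $V$ is a strictly increasing bijection of $\mathbb{R}$. Consequently the equation $V(\xi)=(m+1)\pi$ admits exactly one real solution $\xi_m$ for each $m\in\Lambda_M$.

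For part (ii), the strict monotonicity of $V$ immediately gives $\xi_0<\xi_1<\cdots<\xi_M$, since the target values $(m+1)\pi$ increase with $m$. The bounds follow by evaluating $V$ at the endpoints: oddness of $\theta$ yields $V(0)=\theta(0)=0<\pi=V(\xi_0)$, so $\xi_0>0$; and the quasi-periodicity gives $V(\pi)=M\pi+\theta(0)+2\pi=(M+2)\pi>(M+1)\pi=V(\xi_M)$, so $\xi_M<\pi$. For the reflection symmetry I would establish the functional equation $V(\pi-\xi)=(M+2)\pi-V(\xi)$, which comes from combining oddness and quasi-periodicity of $\theta$ (namely $\theta(\pi-\xi)=2\pi-\theta(\xi)$). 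Evaluating at $\xi=\xi_m$ then gives $V(\pi-\xi_m)=(M+2)\pi-(m+1)\pi=((M-m)+1)\pi$, so by the uniqueness from part (i) one concludes $\pi-\xi_m=\xi_{M-m}=\xi_{um}$, recalling that $u$ acts as $m\mapsto M-m$ on $\Lambda_M$.

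For part (iii), the key step is to recast the Bethe-Ansatz equation \eqref{BAE} in terms of $V$. Using the identity $\theta(\xi)=i\log\bigl((1-\tau^2 e^{2i\xi})/(e^{2i\xi}-\tau^2)\bigr)$, together with the observation that the argument of the logarithm has modulus one (whence $(1-\tau^2 e^{2i\xi})/(e^{2i\xi}-\tau^2)=e^{-i\theta(\xi)}$), and noting that the denominator appearing in \eqref{BAE} is $\tau^2-e^{2i\xi}=-(e^{2i\xi}-\tau^2)$, I would rewrite \eqref{BAE} as $e^{i(M\xi+\theta(\xi))}=-\varepsilon$, that is $e^{iV(\xi)}=-\varepsilon$. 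Substituting $V(\xi_m)=(m+1)\pi$ gives $e^{iV(\xi_m)}=(-1)^{m+1}$, so $\xi_m$ solves \eqref{BAE} precisely with $-\varepsilon=(-1)^{m+1}$, i.e.\ $\varepsilon=(-1)^m$; this is $\varepsilon=1$ for $m$ even and $\varepsilon=-1$ for $m$ odd, as claimed.

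The only genuinely delicate point is the bookkeeping in part (iii): one must verify that the argument of the logarithm defining $\theta$ has modulus one (so that $\theta$ is real and the passage to $e^{iV(\xi)}=-\varepsilon$ is legitimate) and carefully track the sign flip between the denominator $\tau^2-e^{2i\xi}$ in \eqref{BAE} and the denominator $e^{2i\xi}-\tau^2$ in the definition of $\theta$, since a slip there would invert the parity assignment of $\varepsilon$. Everything else reduces to the monotonicity and quasi-periodicity of $\theta$ already available to us.
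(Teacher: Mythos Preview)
Your proposal is correct and follows essentially the same approach as the paper: both argue that $V$ is a strictly increasing bijection of $\mathbb{R}$, use $V(0)=0$ and $V(\pi)=(M+2)\pi$ for the bounds, derive the reflection $\xi_{M-m}=\pi-\xi_m$ from $V(\pi-\xi_m)=(M-m+1)\pi$, and for part~(iii) exponentiate $V(\xi_m)=(m+1)\pi$ to recover Eq.~\eqref{BAE} with $\varepsilon=(-1)^m$. Your write-up is somewhat more explicit (e.g.\ spelling out the quasi-periodicity argument for surjectivity and the functional equation $V(\pi-\xi)=(M+2)\pi-V(\xi)$, and tracking the sign flip in the denominator), but the underlying argument is the same.
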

\begin{proof}
Part (i) follows from the fact that $V:\mathbb{R}\to\mathbb{R}$ is bijective. Indeed, the function
$V(\xi )$
is strictly monotonously increasing on $\mathbb{R}$ with
$ V(\xi)\to -\infty$ as $\xi\to-\infty$ and $ V(\xi)\to +\infty$ as $\xi\to+\infty$.

The first statement of Part (ii) is clear from the strict monotonicity of $V(\xi)$ and the fact that $V(0)=0$ and $V(\pi)=(M+2)\pi$; the second statement is manifest from the equality
$V(\xi_{M-m})=(M-m+1)\pi=V(\pi-\xi_m)$.

By definition, one has that $M\xi_m+\theta(\xi_m)=(m+1)\pi$. Upon multiplication by $i$ and
exponentiating both sides of the equation one deduces
that $e^{iM\xi_m}=(-1)^{m} (1-\tau^2e^{2i\xi_m})/(\tau^2-e^{2i\xi_m})$, which implies Part (iii).
\end{proof}

For the spectral values $\xi_m$, $m\in\Lambda_M$
the spherical function $\Phi_\xi$ \eqref{spherical-function} can now be written explicitly
in terms of the one-dimensional Hall-Littlewood polynomials:
\begin{equation}
\Phi_{\xi_m}(n) =c(\xi_m)e^{i \xi_m n_+}+c(-\xi_m)e^{-i\xi_m n_+}\quad (m\in\Lambda_M,\, n\in\mathbb{Z}) . \label{decomp2}
\end{equation}
Indeed,  for any $f\in\mathcal{C}(\mathbb{Z})$ and $n\in\Lambda_M$
one has that $(\mathcal{J}f)_n=f_n$  (by definition).  Hence,
 Eq. \eqref{decomp2} is immediate from Eqs. \eqref{decomp1}, \eqref{cf}, first  for $n\in\Lambda_M$ and then
(trivially) extended to arbitrary $n\in\mathbb{Z}$ using the $W_S$-invariance of
$\Phi_{\xi_m}$, $m\in\Lambda_M$ (following from Propositions \ref{Winvariance:prp} and \ref{spectrum:prp}).

\section{Discrete Fourier transform}\label{sec7}
Since $\Lambda_M$ is a fundamental domain for the action of $W_S$ on $\mathbb{Z}$, the $W_S$-invariant
subspace
$l^2(\mathbb{Z},\delta)^{W_S}:=  l^2(\mathbb{Z},\delta) \cap  \mathcal{C}(\mathbb{Z})^{W_S}$ can be
identified with the finite-dimensional space
$l^2(\Lambda_M,\Delta)$ of functions $f:\Lambda_M\to \mathbb{C}$ endowed with the inner product
\begin{subequations}
\begin{equation}\label{ip}
\langle  f , g\rangle_\Delta :=\sum_{n\in\Lambda_M} f_n \bar{g}_n \Delta_n\qquad (f,g\in l^2(\Lambda_M,\Delta)),
\end{equation}
with respect to a (suitably normalized) symmetrized weight function $\Delta:\Lambda_M\to (0,\infty)$ of the form
\begin{equation}\label{sweights}
\Delta_n :=\frac{1}{W_S(\tau^2) }  \sum_{m \in W_S n} \delta_m =
\begin{cases}
(1+\tau^2)^{-1} & \text{if $n=0,M$} ,\\
1  & \text{if $0<n<M$}.
\end{cases}
\end{equation}
\end{subequations}
Here we have employed the shorthand notation
$W_S(\tau^2):= \sum_{w\in W_S}\tau^{2\ell (w)}= 1+2\sum_{k=1}^\infty \tau^{2k}=(1+\tau^2)/(1-\tau^2)$.
The computation of $\Delta_n$ \eqref{sweights}
hinges on the following elementary identity:
\begin{align}
\sum_{m\in W_S n}\delta_m&=\sum_{m\in W_S n}\tau^{2\ell (w_m)} \nonumber \\
&=\begin{cases}
\sum_{k=0}^\infty \tau^{2k} = 1/(1-\tau^2) &\text{if}\ n=0, M,\\
(1+\tau^2)\sum_{k=0}^\infty \tau^{2k} = (1+\tau^2)/(1-\tau^2) &\text{if}\ 0< n < M.
\end{cases} \nonumber
\end{align}

Since $X+X^{-1}$ is a central element in $\mathbb{H}$ (cf. Remark \ref{center:rem} above),
it is clear that
$L\hat{T}=\hat{T}L$ and $Lu=uL$, which implies that $L$ maps the invariant subspace
$\mathcal{C}(\mathbb{Z})^{W_S}=\{ f\in \mathcal{C}(\mathbb{Z})\mid sf= f,\ uf=\pm f\}=\{ f\in \mathcal{C}(\mathbb{Z})\mid \hat{T}f=\tau f,\ uf=\pm f\}$ into itself.
Indeed, with the aid of Proposition \ref{symmetry:prp} and Eqs. \eqref{Laction:a}, \eqref{Laction:b} one infers that
$L$ restricts to a self-adjoint in operator in $l^2(\Lambda_M,\Delta)$ whose explicit action on
$f\in l^2(\Lambda_M,\Delta)$ is given by:
\begin{equation}
(Lf)_n=\begin{cases} (1+\tau^2) f_1 &\text{if}\quad n=0 ,\\
f_{n+1}+f_{n-1} &\text{if}\quad 0< n <M,\\
(1+\tau^2) f_{M-1} &\text{if}\quad n=M.
\end{cases}
\end{equation}

\begin{theorem}\label{diagonalization:thm} The spherical functions
$\Phi_{\xi_m}:\Lambda_M\to\mathbb{R}$, $m\in\Lambda_M$ are eigenfunctions of
the Laplacian $L$ in $l^2(\Lambda_M,\Delta)$:
$$ L\Phi_{\xi_m} = 2\cos(\xi_m)\Phi_{\xi_m}\qquad   (m\in\Lambda_M).$$
\end{theorem}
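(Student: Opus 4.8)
The plan is to push the eigenvalue problem through the intertwining operator $\mathcal{J}$, where the deformed Laplacian $L$ turns into the plain symmetric shift $D+D^{-1}$, and then to read off the eigenvalue on plane waves. Since the excerpt establishes (from Propositions \ref{bijective:prp} and \ref{intertwining:prp}) the global intertwining identity $\hat{T}(h)\mathcal{J}=\mathcal{J}I(h)$ for all $h\in\mathbb{H}$, and since $L=\hat{T}(X+X^{-1})$ by definition \eqref{Lop}, I would first apply this identity with $h=X+X^{-1}$. Because $X\to D$ under the integral-reflection representation, one has $I(X+X^{-1})=D+D^{-1}$, and hence the key transport relation $L\mathcal{J}=\mathcal{J}(D+D^{-1})$. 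Applying both sides to $\phi_\xi$ and recalling $\Phi_\xi=\mathcal{J}\phi_\xi$ yields $L\Phi_\xi=\mathcal{J}(D+D^{-1})\phi_\xi$.

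Next I would evaluate $(D+D^{-1})\phi_\xi$ directly. From $(Df)_n=f_{n-1}$ one computes $(D\mathbf{e}^{i\xi})(n)=e^{i\xi(n-1)}=e^{-i\xi}\,\mathbf{e}^{i\xi}(n)$ and likewise $(D^{-1}\mathbf{e}^{i\xi})(n)=e^{i\xi}\,\mathbf{e}^{i\xi}(n)$, so that each plane wave is an eigenvector, $(D+D^{-1})\mathbf{e}^{\pm i\xi}=2\cos(\xi)\,\mathbf{e}^{\pm i\xi}$. Invoking the explicit decomposition $\phi_\xi=c(\xi)\mathbf{e}^{i\xi}+c(-\xi)\mathbf{e}^{-i\xi}$ from Eq. \eqref{decomp1} (whose two summands share the same eigenvalue since $\cos$ is even), it follows at once that $(D+D^{-1})\phi_\xi=2\cos(\xi)\phi_\xi$. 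Substituting this back gives $L\Phi_\xi=2\cos(\xi)\,\mathcal{J}\phi_\xi=2\cos(\xi)\,\Phi_\xi$, an identity valid for \emph{every} $\xi\in\mathbb{R}$. (Equivalently, one could argue that $D+D^{-1}$ is the image of the central element $X+X^{-1}$, cf. Remark \ref{center:rem}, so it commutes with $1+\tau I$ and thus preserves the eigenvalue coming from $\mathbf{e}^{i\xi}$; the plane-wave computation above is the quickest route.)

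Finally I would specialize to $\xi=\xi_m$ in order to land in the finite-dimensional space $l^2(\Lambda_M,\Delta)$. By Propositions \ref{Winvariance:prp} and \ref{spectrum:prp}, the values $\xi_m$ are precisely those for which $\Phi_{\xi_m}\in\mathcal{C}(\mathbb{Z})^{W_S}$, so each $\Phi_{\xi_m}$ descends to a genuine element of $l^2(\Lambda_M,\Delta)\cong l^2(\mathbb{Z},\delta)^{W_S}$, on which $L$ restricts to the self-adjoint operator described just before the theorem. The eigenvalue relation then reads $L\Phi_{\xi_m}=2\cos(\xi_m)\Phi_{\xi_m}$ in this space, as claimed.

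The computation itself presents no real obstacle: all the genuinely nontrivial work has been front-loaded into the intertwining relations and the $W_S$-invariance criterion. The only point deserving care is conceptual rather than technical, namely recognizing that the eigenvalue identity holds for arbitrary $\xi$ at the level of $\mathcal{C}(\mathbb{Z})$, and that the restriction to the spectral values $\xi_m$ serves solely to guarantee $W_S$-invariance so that the functions live in $l^2(\Lambda_M,\Delta)$ and the restricted $L$ acts as stated.
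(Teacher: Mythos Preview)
Your argument is essentially the paper's own: transport $L$ to $D+D^{-1}$ via the intertwining relation $L\mathcal{J}=\mathcal{J}(D+D^{-1})$, evaluate on the plane-wave decomposition of $\phi_\xi$, and then specialize $\xi=\xi_m$ to land in $\mathcal{C}(\mathbb{Z})^{W_S}\cong l^2(\Lambda_M,\Delta)$. The paper compresses this into one line but the logic is identical.

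One small point you omit and the paper does include: to assert that $\Phi_{\xi_m}$ is an \emph{eigenfunction} (rather than merely a solution of the eigenvalue equation, possibly the zero solution) you must check $\Phi_{\xi_m}\neq 0$. The paper dispatches this by evaluating at the origin, $\Phi_{\xi_m}(0)=c(\xi_m)+c(-\xi_m)=1+\tau^2>0$. Adding this one-line verification completes your proof.
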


\begin{proof}
Elementary manipulations involving
the explicit expansion in Eq. \ref{decomp2} reveal that
$L\Phi_{\xi_m}=(\hat{X}+\hat{X}^{-1})\mathcal{J}\phi_{\xi_m}=
\mathcal{J}(D+D^{-1})(c(\xi_m)\mathbf{e}^{i\xi_m}+c(-\xi_m)\mathbf{e}^{-i\xi_m})=
2\cos(i\xi_m) \Phi_{\xi_m}$, since $(D+D^{-1})\mathbf{e}^{i\xi}=2\cos(\xi)\mathbf{e}^{i\xi}$.
To verify that $\Phi_{\xi_m}\neq 0$, it suffices to compute its value at the origin:
$\Phi_{\xi_m}(0)=c(\xi_m)+c(-\xi_m)=1+\tau^2>0$.
\end{proof}

It follows form Theorem \ref{diagonalization:thm} and Part (ii) of Proposition \ref{spectrum:prp} that the eigenvalues of $L$ in $l^2(\Lambda_M,\Delta)$ are non-degenerate. The spherical functions
$\Phi_{\xi_m}$, $m\in\Lambda_M$ thus form an orthogonal basis of
$l^2(\Lambda_M,\Delta)$.
To describe the corresponding Plancherel formula it is convenient to introduce the dual space
$l^2(\Lambda_M,\hat{\Delta})$ of functions $f:\Lambda_M\to\mathbb{C}$ endowed with the inner product
\begin{subequations}
\begin{equation}\label{dip}
\langle  f , g\rangle_{\hat{\Delta}} :=\sum_{m\in\Lambda_M} f_m \bar{g}_m \hat{\Delta}_m\qquad (f,g\in l^2(\Lambda_M,\hat{\Delta})),
\end{equation}
with respect to a weight function $\hat{\Delta}:\Lambda_M\to (0,\infty)$ of the form
\begin{align}
\hat\Delta_m &:=\frac{1}{2c(\xi_m)c(-\xi_m) V'(\xi_m)} \nonumber \\
&=
\frac{ (1-\cos(2\xi_m))}{(1+\tau^4-2\tau^2\cos(2\xi_m))}
\Bigl(M+\frac{2(1-\tau^4)}{1+\tau^4-2\tau^2\cos(2\xi_m)}\Bigr)^{-1} .
\end{align}
\end{subequations}
Let $\boldsymbol{\Phi}:\Lambda_M\times\Lambda_M\to\mathbb{R}$ denote the kernel function given by
$\boldsymbol{\Phi}_{m;n}:=\Phi_{\xi_m}(n)$ ($m,n\in\Lambda_M$).

\begin{theorem}\label{orthogonality:thm}
\begin{itemize}
\item[(i)] The functions $\boldsymbol{\Phi}_{m;\cdot}:\Lambda_M\to\mathbb{R}$, $m\in\Lambda_M$ form an orthogonal basis
of $l^2(\Lambda_M,\Delta)$, viz. for any $m,m^\prime\in\Lambda_M$
$$ \langle\boldsymbol{\Phi}_{m;\cdot},\boldsymbol{\Phi}_{m^\prime;\cdot}\rangle_\Delta= \sum_{n=0}^M  \boldsymbol{\Phi}_{m;n}\boldsymbol{\Phi}_{m^\prime ;n}\Delta_n =
\begin{cases}
0 &\text{if}\quad m\neq m^\prime ,\\
\hat{\Delta}_m^{-1}&\text{if}\quad m=m^\prime .
\end{cases}
$$
\item[(ii)] The functions $\boldsymbol{\Phi}_{\cdot ;n}:\Lambda_M\to\mathbb{R}$, $n\in\Lambda_M$ form an orthogonal basis
of $l^2(\Lambda_M,\hat{\Delta})$, viz. for any $n,n^\prime\in\Lambda_M$
$$\langle\boldsymbol{\Phi}_{\cdot;n},\boldsymbol{\Phi}_{\cdot ;n^\prime}\rangle_{\hat{\Delta}}=\sum_{m=0}^M  \boldsymbol{\Phi}_{m;n}\boldsymbol{\Phi}_{m ;n^\prime }\hat{\Delta}_m =
\begin{cases}
0 &\text{if}\quad n\neq n^\prime ,\\
\Delta_n^{-1}&\text{if}\quad n=n^\prime .
\end{cases}
$$
\end{itemize}
\end{theorem}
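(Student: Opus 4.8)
The plan is to prove part (i) in full and then obtain part (ii) from it by a purely linear-algebraic duality, since the square matrix $\boldsymbol{\Phi}=(\boldsymbol{\Phi}_{m;n})_{m,n\in\Lambda_M}$ carries both statements at once. Writing $D_\Delta:=\mathrm{diag}(\Delta_n)_{n\in\Lambda_M}$ and $D_{\hat\Delta}:=\mathrm{diag}(\hat\Delta_m)_{m\in\Lambda_M}$, both positive definite, part (i) is precisely the matrix identity $\boldsymbol{\Phi}\,D_\Delta\,\boldsymbol{\Phi}^{T}=D_{\hat\Delta}^{-1}$. Multiplying on the left by $D_{\hat\Delta}$ exhibits $D_{\hat\Delta}\boldsymbol{\Phi}D_\Delta$ as a left inverse of $\boldsymbol{\Phi}^{T}$; as $\boldsymbol{\Phi}$ is a finite square matrix this is automatically a two-sided inverse, so $\boldsymbol{\Phi}^{T}D_{\hat\Delta}\boldsymbol{\Phi}=D_\Delta^{-1}$, which is exactly part (ii). (The functions are real-valued by the remark following \eqref{cf}, so the transpose, rather than the conjugate-transpose, is the correct object.) Thus the whole theorem reduces to part (i).

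For the off-diagonal entries of part (i) I would invoke the spectral theory of the finite self-adjoint operator $L$ on $l^2(\Lambda_M,\Delta)$. By Theorem \ref{diagonalization:thm} the rows $\boldsymbol{\Phi}_{m;\cdot}=\Phi_{\xi_m}$ are eigenfunctions with eigenvalue $2\cos(\xi_m)$, and by Proposition \ref{spectrum:prp}(ii) we have $0<\xi_0<\cdots<\xi_M<\pi$, so these eigenvalues are pairwise distinct by injectivity of $\cos$ on $(0,\pi)$. Self-adjointness of $L$ on $l^2(\Lambda_M,\Delta)$ (Proposition \ref{symmetry:prp}, restricted as in Section \ref{sec7}) then forces $\langle\Phi_{\xi_m},\Phi_{\xi_{m'}}\rangle_\Delta=0$ for $m\neq m'$ through the standard relation $(2\cos\xi_m-2\cos\xi_{m'})\langle\Phi_{\xi_m},\Phi_{\xi_{m'}}\rangle_\Delta=0$.

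The substance of the argument is the diagonal evaluation $\langle\Phi_{\xi_m},\Phi_{\xi_m}\rangle_\Delta=\hat\Delta_m^{-1}=2\,c(\xi_m)c(-\xi_m)V'(\xi_m)$. Since $n_+=n$ for $n\in\Lambda_M$, I would insert the plane-wave expansion \eqref{decomp2}, expand its square into the constant term $2c(\xi_m)c(-\xi_m)$ and the oscillating terms $c(\xi_m)^2e^{2i\xi_m n}$ and $c(-\xi_m)^2e^{-2i\xi_m n}$, and sum against $\Delta_n$, treating the endpoints $\Delta_0=\Delta_M=(1+\tau^2)^{-1}$ separately from the bulk $\Delta_n=1$. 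The constant part yields $2c(\xi_m)c(-\xi_m)\bigl(M+\tfrac{1-\tau^2}{1+\tau^2}\bigr)$, producing the leading $M$ of $V'$. The oscillating geometric sums generate a factor $e^{2iM\xi_m}$, which I would eliminate with the Bethe-Ansatz equation \eqref{BAE} of Proposition \ref{spectrum:prp}(iii) in its squared form $c(\xi_m)^2e^{2iM\xi_m}=c(-\xi_m)^2$; after this substitution the endpoint and bulk contributions telescope and, via $c(\xi)c(-\xi)=(1+\tau^4-2\tau^2\cos2\xi)/\bigl(2(1-\cos2\xi)\bigr)$, recombine with the surplus $\tfrac{1-\tau^2}{1+\tau^2}$ into exactly the correction $\theta'(\xi_m)=2(1-\tau^4)/(1+\tau^4-2\tau^2\cos2\xi_m)$, giving $2c(\xi_m)c(-\xi_m)V'(\xi_m)$. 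Positivity is clear since $c(\xi_m)c(-\xi_m)=|c(\xi_m)|^2>0$ on $(0,\pi)$ and $V'>0$ by strict monotonicity of $V$. The hard part is precisely this last reduction, namely marrying the endpoint-corrected geometric sums to the quasi-derivative $V'$ so that the oscillatory remainder condenses cleanly into $\theta'$; should the direct bookkeeping prove unwieldy, I would instead apply a discrete summation-by-parts (Green) identity to $\Phi_\xi,\Phi_\eta$ and pass to the limit $\eta\to\xi_m$ by l'H\^opital, which expresses the norm as a boundary Wronskian differentiated in the spectral variable and isolates $V'(\xi_m)$ automatically.
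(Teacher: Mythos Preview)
Your proposal is correct and follows essentially the same approach as the paper: the equivalence of (i) and (ii) via the square-matrix/orthogonality argument, the off-diagonal vanishing from self-adjointness of $L$ together with the simplicity of the eigenvalues $2\cos\xi_m$, and the diagonal norm computed by inserting the plane-wave expansion, summing the geometric series, and eliminating $e^{\pm 2iM\xi_m}$ through the squared Bethe relation $e^{2iM\xi_m}=c(-\xi_m)^2/c(\xi_m)^2$. The paper's write-up is terser (it simply says the expression ``is readily seen to simplify to $2c(\xi_m)c(-\xi_m)V'(\xi_m)$''), whereas you spell out how the endpoint corrections and the $\theta'$ term assemble; your backup Wronskian/l'H\^opital route is not used in the paper.
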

\begin{proof}
Part (i) and Part (ii) are equivalent, since both can be reformulated as the affirmation that the matrix
$[ \hat{\Delta}_m^{1/2}\boldsymbol{\Phi}_{m;n}\Delta_n^{1/2} ]_{0\leq m,n\leq M}$ is orthogonal.
In view of the abovementioned orthogonality
of the basis $\boldsymbol{\Phi}_{m;\cdot}=\Phi_{\xi_m}$, $m\in\Lambda_M$ in $l^2(\Lambda_M,\Delta)$, it is thus sufficient
to verify the norm formula in Part (i) for $m=m^\prime$. An explicit computation reveals that
\begin{align*}
&\sum_{n=0}^M \Phi_{\xi_m}^2 (n)\Delta_n =\sum_{n=0}^M \left( c(\xi_m)e^{i\xi_m n}+c(-\xi_m)e^{-i\xi_m n} \right)^2 \Delta_n\\
& = c^2(\xi_m) \left( \sum_{n=1}^{M-1} e^{2i\xi_m n}+\frac{1+e^{2i\xi_m M}}{1+\tau^2}\right)
+ c^2(-\xi_m) \left( \sum_{n=1}^{M-1} e^{-2i\xi_m n}+\frac{1+e^{-2i\xi_m M}}{1+\tau^2}\right) \\
&+2c(\xi_m)c(-\xi_m)\left( M+\frac{1-\tau^2}{1+\tau^2}\right) ,
\end{align*}
which is readily seen to
simplify to $2c(\xi_m )c(-\xi_m) V^\prime (\xi_m)$ upon summation of the two terminating geometric series and
subsequent elimination of all occurrences of exponentials of the form $e^{\pm 2iM\xi_m}$ via the relation
$e^{2iM\xi_m}=c^2(-\xi_m )/c^2(\xi_m)$ (cf. Propositions \ref{Winvariance:prp} and \ref{spectrum:prp}).
\end{proof}

Finally, we are now in the position to define the following {\em discrete Fourier transform} $\mathcal F:l^2(\Lambda_M,\Delta)\to l^2(\Lambda_M,\hat\Delta)$ associated with $\mathbb{H}$:
\begin{subequations}
\begin{equation}
\hat{f}_m:=(\mathcal{F}f)_m:=\langle f, \boldsymbol{\Phi}_{m;\cdot}\rangle_\Delta
=\sum_{n=0}^M f_n\boldsymbol{\Phi}_{m;n}\Delta_n\qquad (f\in l^2(\Lambda_M,\Delta), m\in \Lambda_M).
\end{equation}
According to Theorem \ref{orthogonality:thm} the transform in question constitutes
a unitary isomorphism of $l^2(\Lambda_M,\Delta)$ onto $l^2(\Lambda_M,\hat{\Delta})$
with an inversion formula given by
\begin{equation}
f_n=(\mathcal{F}^{-1}\hat{f})_n=\langle \hat{f}, \boldsymbol{\Phi}_{\cdot;n}\rangle_{\hat \Delta}
=\sum_{m=0}^M \hat{f}_m \boldsymbol{\Phi}_{m;n} \hat\Delta_m \qquad (\hat{f}\in l^2(\Lambda_M,\hat{\Delta}), n\in \Lambda_M).
\end{equation}
\end{subequations}
Furthermore, according to Theorem \ref{diagonalization:thm} the discrete Fourier transform
diagonalizes the Laplacian $L$ in $l^2(\Lambda_M,\Delta)$ in the sense
that $L= \mathcal{F}^{-1} \hat{E} \mathcal{F}$, where $\hat{E}$ denotes the real multiplication operator in $l^2(\Lambda_M,\hat{\Delta})$  given by $(\hat{E}f)_m:=2\cos (\xi_m) f_m$ ($f\in l^2(\Lambda_M,\hat{\Delta})$, $m\in\Lambda_M$).

\begin{remark}
The Hilbert space $l^2(\Lambda_M,\Delta)$ admits the following natural orthogonal decomposition:
$l^2(\Lambda_M,\Delta)=l^2(\Lambda_M,\Delta)^u \oplus l^2(\Lambda_M,\Delta)^{-u}$,
where $l^2(\Lambda_M,\Delta)^u:=\{ f\in  l^2(\Lambda_M,\Delta) \mid uf=f\}$
and $l^2(\Lambda_M,\Delta)^{-u}:=\{ f\in  l^2(\Lambda_M,\Delta) \mid uf=-f\}$ (so
$l^2(\Lambda_M,\Delta)^{u}\cong l^2(\mathbb{Z},\delta )\cap \mathcal{C}(\mathbb{Z})^W$).
Similarly, the Hilbert space $l^2(\Lambda_M,\hat{\Delta})$ decomposes orthogonally as
$l^2(\Lambda_M,\hat{\Delta})=l^2(\Lambda_M^e,\hat{\Delta})\oplus l^2(\Lambda_M^o,\hat{\Delta})$.
The discrete Fourier transform
$\mathcal{F}$ maps $l^2(\Lambda_M,\Delta)^u$ onto $l^2(\Lambda_M^e,\hat{\Delta})$
and  $l^2(\Lambda_M,\Delta)^{-u}$ onto $l^2(\Lambda_M^o,\hat{\Delta})$. Indeed, it follows
from Theorem \ref{orthogonality:thm} combined with (the proof of) Proposition \ref{Winvariance:prp} and Part (iii) of Proposition \ref{spectrum:prp}, that the spherical functions
$\Phi_{\xi_m}$, $m\in\Lambda_M^e$ and $\Phi_{\xi_m}$, $m\in\Lambda_M^o$ constitute orthogonal bases of the
subspaces $l^2(\Lambda_M,\Delta)^{u}$ and $l^2(\Lambda_M,\Delta)^{-u}$, respectively.
\end{remark}

\begin{remark}\label{tau=1} It is not difficult to infer that for $\tau\to 1$ one has that:
$ \xi_m\to \frac{m\pi}{M}$,   $\Phi_{m;n}\to 2\cos( \frac{mn\pi}{M} )$, and
$$\Delta_n,2M\hat{\Delta}_n\to
\begin{cases} 1/2 &\text{if}\ n=0,M, \\ 1 &\text{if}\ 0<n<M. \end{cases}
$$
In other words, in this limit
our discrete Fourier transform (diagonalizing the Laplacian $L$ \eqref{Lop})
degenerates as expected to the discrete cosine transform (diagonalizing the undeformed discrete Laplacian)
\cite{stra:discrete,aus-gru:fourier}.
\end{remark}

\begin{remark}\label{connections}
The orthogonality relations in Theorem \ref{orthogonality:thm} $(ii)$ are a finite counterpart of well-known orthogonality relations for the one-dimensional Hall-Littlewood polynomials $\phi_\xi(n)$, $n\in\mathbb{Z}_{\geq 0}$:
$$\frac{1}{2\pi}\int_0^\pi \phi_\xi(n)\phi_\xi(n^\prime)\frac{ \text{d}\xi}{ c(\xi )c(-\xi)} =\begin{cases} 0&\text{if}\ n\neq n^\prime ,\\ \mathcal{N}_n &\text{if}\ n=n^\prime ,\end{cases} $$
with $\mathcal{N}_0= 1+\tau^2$ and $\mathcal{N}_n=1$ for $n>0$.
They constitute the simplest instances
of (conjectural) discrete orthogonality relations for the Hall-Littlewood polynomials \cite{die:finite-dimensional}
originating from the Bethe-Ansatz method for the periodic quantum integrable particle model with pairwise
delta-potential interactions known as the delta Bose gas on the circle
\cite{die:diagonalization,lie-lin:exact,gau:fonction,kor-bog-ize:quantum}.
From this perspective, the spectral problem for our Laplacian $L$ \eqref{Lop} may be viewed as a discrete analog of the textbook spectral problem for a quantum particle on the circle in the presence of a delta-potential. Based on the results presented in this paper, it is natural to expect that the generalization of the representations in Section \ref{sec3} to
the case of double affine Hecke algebras associated with arbitrary
Weyl groups will provide an appropriate algebraic framework encoding the symmetries of the
integrable discretization of the delta Bose gas on the circle introduced in
\cite{die:diagonalization}. This would generalize (i.e. discretize) corresponding recent constructions in terms of degenerations of the double affine Hecke algebra for the
(continuous) delta Bose gas on the circle \cite{ems-opd-sto:periodic}.
It is furthermore plausible that the spectral problem for $L$ \eqref{Lop} is in turn a singular limiting case
of a transcedental spectral problem for Ruijsenaars' analytic difference version of the one-dimensional
Schr\"odinger operator with
Weierstrass $\wp$-potential \cite{rui:complete,rui:generalized,rui:relativistic}, which can be captured through a very general double affine Hecke-algebraic construction due to Cherednik \cite{che:difference-elliptic}.
From this viewpoint, the representations of the double affine Hecke algebra at $q=1$ exhibited here in rank one are expected to interpolate between the corresponding representations of the degenerate double affine Hecke algebra in \cite{ems-opd-sto:periodic} and the more general Hecke-algebraic settings of Ref. \cite{che:difference-elliptic}.
\end{remark}

\vspace{3ex}
\noindent {\bf Acknowledgments.} We thank the referees for some helpful suggestions improving the presentation.

\bibliographystyle{amsplain}

\end{document}